\theoremstyle{plain}
\newcommand{\A}{\mathcal{A}}
\newcommand{\tx}{\otimes }
\newcommand{\ts}{\oplus}
\newcommand{\ri}{\rightarrow }
\newcommand{\Fa}{\Breve{F}}
\newcommand{\Lo}{\widehat{L}}
\def\tx{\otimes}
\def\ts{\oplus}
\def\Fn{\widetilde F}
\newcommand{\Lh}{\frak{L}}
\newcommand{\Rh}{\frak{R}}
\newtheorem{thm}{\bf Theorem}
\newtheorem{lem}[thm]{\bf  Lemma} % Lemma
\newtheorem{pro}[thm]{\bf  Proposition} % Proposition
\newtheorem{hq}[thm]{\bf Corollary} % Corollary
\theoremstyle{definition}
 \DeclareMathOperator{\Coker}{Coker}
\DeclareMathOperator{\Aut}{{Aut}} \DeclareMathOperator{\Ob}{{
Ob}}\DeclareMathOperator{\End}{{End}}
\begin{document}

\centerline{\bf \large Crossed Bimodules over Rings and Shukla
Cohomology} \vspace{10pt}

\centerline{\bf Nguyen Tien Quang}

\centerline{ Department of Mathematics, Hanoi National University of
Education,  Hanoi, Vietnam} \centerline{cn.nguyenquang@gmail.com}
\vspace{5pt}
 \centerline{\bf Pham Thi Cuc}
 \centerline{ Natural Science Department,
Hongduc University,  Thanhhoa, Vietnam }

\centerline{ cucphamhd@gmail.com}

\begin{abstract}
In this paper we present some applications of Ann-category theory to
classification of crossed bimodules over rings, classification of
ring extensions  of the type of a crossed bimodule.
\end{abstract}
%%%%% end %%%%%%%%%%%
AMS Subject Classification: 18D10, 16E40, 16S70\\ {\it Keywords:}
Ann-category, crossed bimodule, obstruction, ring extension, ring
cohomology

%%%% Start %%%%%%
\section{Introduction}
Crossed modules over groups were introduced by J. H. C. Whitehead
\cite{White49}. A crossed module over a group $G$ with kernel a
$G$-module $M$ represents an element in the cohomology
 $H^3(G,M)$ \cite{Mac-Wh}. The results  on group extensions of the type of a crossed module
were also represented by the cohomology of groups  \cite {Br94}.

Later,  H-J. Baues \cite{B2002} introduced crossed modules over $\bf
k$-algebras. Crossed modules over $\bf k$-algebras which are  $\bf
k$-split with the same kernel $M$ and cokernel $B$ were classified
by Hochschild cohomology $H^3_{Hoch}(B,M)$ \cite{BM2002}.

In \cite {P2004} the field $\bf k$ is replaced  by a commutative
ring $\mathbb K$, and crossed modules over $\mathbb K$-algebras were
called {\it crossed bimodules}. In particular, if $\mathbb K=\mathbb
Z$ one obtains crossed bimodules  over rings.

Crossed modules over groups can be defined over rings in a different
way under the name of  {\it E-systems}.  The notion of an E-system
is weaker than that of a crossed bimodule over rings.

Crossed modules over groups are often studied in the form of
$\mathcal G$-groupoids  \cite{Br76}, or  strict  2-groups
\cite{Baez}. From this point, we represent E-systems in the form of
strict Ann-categories (also called strict 2-rings). Hence, one can
use the results on Ann-category  theory to study crossed bimodules
over rings.

The plan of this paper is, briefly, as follows.  Section 2 is
dedicated to review definitions and some basic facts concerning
Ann-categories. In Section 3, we introduce the concept of an
E-system and prove that there is an isomorphism between  the
category of regular E-systems and that of crossed bimodules over
rings. The relation among these concepts and crossed $\bf C$-modules
in the sense of T. Porter\cite{TP} is also discussed.  The next
section is devoted to showing a categorical equivalence of the
category of E-systems and a subcategory of the category of strict
Ann-categories, which is an extending of the result of  R. Brown and
C. Spencer \cite{Br76}.

The group extensions of the type of a crossed module were dealt with
by R. Brown and O. Mucuk \cite{Br94}. The similar results for
$\partial$-extensions by an algebra $R$ were done by H-J. Baues and
T. Pirashvili \cite{P2004} in a particular case.
 In Section 5 we solve this problem for ring extensions of the type of an
 E-system by  Shukla cohomology groups. Our classification result contains
  the result in \cite{P2004} when $R$ is a ring.
\section{Ann-categories}
%{Preliminaries}
We state a minimum of necessary concepts and facts of Ann-categories
and Ann-functors (see \cite{Q1}).

A  {\it Gr-category} (or a {\it categorical group}) is a monoidal
category in which all objects are invertible and the background
category is a groupoid. A {\it Picard } category (or a {\it
symmetric} categorical group) is a Gr-category equipped with a
symmetry constraint which is compatible with associativity
constraint.

\noindent{\bf Definition 1.} An {\it Ann-category} consists of

\noindent $\mathrm{i)}$ a category  $\A$ together with two
bifunctors $\ts, \tx :\A \times \A \rightarrow \A$;

\noindent $\mathrm{ii)}$ a fixed object $0 \in \Ob(\A)$ together
with natural isomorphisms  ${\bf a_+, c,g,d}$ such that $(\A,
\ts,{\bf a_+,c},(0,{\bf g,d}))$ is a Picard category;

\noindent $\mathrm{iii)}$ a fixed object $1 \in \Ob(\A)$ together
with natural isomorphisms  ${\bf a,l,r}$ such that $(\A,\tx,{\bf
a},(1,{\bf l,r}))$ is a monoidal category;

\noindent $\mathrm{iv)}$ natural isomorphisms $\Lh,\Rh$ given by
\[\begin{array}{ccccc}
\Lh_{A,X,Y}: & A\tx (X\ts Y) &\longrightarrow & (A\tx X)\ts (A\tx Y),\\
\Rh_{X,Y,A}: & (X\ts Y)\tx A &\longrightarrow & (X\tx A)\ts (Y\tx A)
\end{array}\]
such that the following conditions hold:

\noindent (Ann - 1) for  $A\in \Ob(\A)$, the pairs $(L^A, \Breve
L^A), (R^A, \Breve R^A)$ defined by
\[\begin{array}{lclclclcc}
L^A & = & A\tx - & \qquad\qquad & R^A & = & -\tx A \\
\breve L^A_{X,Y} & = & \Lh_{A,X,Y} & \qquad\qquad & \breve R^A_{X,Y}
& = & \Rh_{X,Y,A}
\end{array}\]
are $\ts$-functors which are compatible with ${\bf a_+}$ and ${\bf
c}$;

\noindent (Ann - 2) for all $A,B,X,Y \in \Ob(\A),$ the following
diagrams commute  {\scriptsize
\[\begin{diagram}
\node{(AB)(X\ts Y)}\arrow{s,l}{\Breve L^{AB}}\node{A(B(X\ts
Y))}\arrow{w,t} {{\bf a}_{A,B,X\ts Y}}\arrow{e,t}{id_A\tx \Breve
L^B}\node{A(BX \ts BY)}\arrow{s,r}
{\Breve L^A}\\
\node{(AB)X\ts (AB)Y}\node[2]{A(BX)\ts A(BY)}\arrow[2]{w,t}{{\bf
a}_{A,B,X}\ts {\bf a}_{A,B,Y}}
\end{diagram}\]
%%%%%%%%%%%%%%%%%%%%%%%%%%%%%%%%%%%%%%%%%%%%%%%%%%%%%%%%%%%%%%%%%%%%%%%%%%%%%%%%%%%%%%%%

\[\begin{diagram}
\node{(X\ts Y)(BA)}\arrow{s,l}{\Breve R^{BA}}\arrow{e,t}{{\bf
a}_{X\ts Y,B,A}} \node{((X\ts Y)B)A}\arrow{e,t}{\Breve R^B\tx id_A}
\node{(XB \ts YB)A}\arrow{s,r}{\Breve R^A}\\
\node{X(BA)\ts Y(BA)}\arrow[2]{e,t}{{\bf a}_{X,B,A}\ts {\bf
a}_{Y,B,A}}\node[2]{(XB)A \ts (YB)A}
\end{diagram}\]
%%%%%%%%%%%%%%%%%%%%%%%%%%%%%%%%%%%%%%%%%%%%%%%%%%%%%%%%%%%%%%%%%%%%%%%%%%%%%%%%%%%%%%%%
\[\begin{diagram}
\node{(A(X\ts Y)B}\arrow{s,l}{\Breve L^A \tx id_B} \node{A((X\ts
Y)B)}\arrow{w,t}{{\bf a}_{A,X\ts Y,B}}\arrow{e,t}{id_A\tx \breve
R^B}
\node{A(XB \ts YB)}\arrow{s,r}{\Breve L^A}\\
\node{(AX\ts AY)B}\arrow{e,t}{\breve R^B}\node{(AX)B \ts (AY)B}
\node{A(XB) \ts A(YB)}\arrow{w,t}{{\bf a}\ts {\bf a}}
\end{diagram}\]
%%%%%%%%%%%%%%%%%%%%%%%%%%%%%%%%%%%%%%%%%%%%%%%%%%%%%%%%%%%%%%%%%%%%%%%%%%%%%%%%%%%%%%%%
\[\begin{diagram}
\node{(A\ts B)X\ts (A\ts B)Y}\arrow{s,l}{\Breve R^X \ts \breve R^Y}
\node{(A\ts B)(X\ts Y)}\arrow{w,t}{\breve L^{A\ts
B}}\arrow{e,t}{\breve R^{X\ts Y}}
\node{A(X \ts Y)\ts B(X\ts Y)}\arrow{s,r}{\Breve L^A \ts\Breve L^B }\\
\node{(AX\ts BX)\ts (AY\ts BY)}\arrow[2]{e,t}{{\bf v}}\node[2]{(AX
\ts AY)\ts (BX \ts BY)}
\end{diagram}\]}
%%%%%%%%%%%%%%%%%%%%%%%%%%%%%%%%%%%%%%%%%%%%%%%%%%%%%%%%%%%%%%%%%%%%%%%%%%%%%%%%%%%%%%%%
where ${\bf v} = {\bf v}_{U,V,Z,T}:(U\ts V)\ts (Z\ts T)
\longrightarrow (U\ts Z)\ts (V\ts T)$ is a unique  morphism
constructed from $\oplus, {\bf a_+, c}, id$ of the symmetric
monoidal category $(\A,\ts)$;

\noindent (Ann - 3) for the unit $1 \in \Ob(\A)$ of the operation
$\tx$, the following diagrams commute {\scriptsize
\[\begin{diagram}
\node{1(X\ts Y)} \arrow[2]{e,t}{\breve L^{1}}\arrow{se,b}{{\bf
l}_{X\ts Y}} \node[2]{1 X\ts 1 Y}\arrow{sw,b}{{\bf l}_X\ts {\bf
l}_Y} \node{(X\ts Y)1}\arrow[2]{e,t}{\breve R^{1}}\arrow{se,b}{{\bf
r}_{X\ts Y}}
\node[2]{X1 \ts Y1}\arrow{sw,b}{{\bf r}_X\ts {\bf r}_Y}\\
\node[2]{X\ts Y} \node[3]{X\ts Y.}
\end{diagram}\]}
%%%%%%%%%%%%%%%%%%%%%%%%%%%%%%%%%%%%%%%%%%%%

%%%%%%%%%%%%%%%%%%%%%%%%%%%%%%%%%%%%%%%%%%%
%
An Ann-category $\mathcal A$ is {\it regular} if its symmetry
constraint satisfies the condition $\mathbf{c}_{X,X}=id$, and  {\it
strict} if all of its constraints are identities.

\noindent{\bf Example 1.} Let $\A=(\A,\oplus)$ be a Picard category
whose unity and associativity constraints are identities.  Denote by
  End$(\A)$ a category whose objects are symmetric monoidal functors
  from
$\A$ to $\A$ and whose morphisms are $\oplus$-morphisms. Then,
End$(\A)$ is a Picard category together with the operation $\oplus$
on monoidal functors and on morphisms. In this $\oplus$-category,
the  unity and associativity constraints are identities, the
commutativity constraint is given by
$$(\mathbf{c}^\ast_{F,G})_X=\mathbf{c}_{FX,GX},\;\;X\in\Ob(\A),\;F,G\in\End(\A).$$
The operation $\otimes$ on End$(\A)$ is naturally defined being the
composition of functors. Then, End$(\A)$ together with two
operations $\oplus,\otimes$ is an Ann-category in which the left
distributivity constraint is given by
$$(\mathfrak{L
}^\ast_{F,G,H})_X=\breve{F}_{GX,HX},\;\;X\in\Ob(\A),$$ and other
constraints are identities (for details, see \cite{Q07}).

\noindent{\bf Example 2.} Let $R$ be a ring with an unit and  $M$ be
an $R$-bimodule. The pair $\mathcal{I}=(R,M)$ is a category whose
objects are elements of $R$ and whose morphisms are automorphisms
$(r,a):r\ri r$, $r\in R, a\in M$. The composition of morphisms is
given by the addition in $M$. Two operations $\oplus$ and $\otimes$
on $\mathcal I$ is defined by
$$x\oplus y=x+y,\;(x,a)+(y,b)=(x+y,a+b),$$
$$x\otimes y=xy,\;(x,a)\otimes(y,b)=(xy, xb+ay).$$
The constraints of $\mathcal I$ are identities, except for left
distributivity and commutativity constraints which are given by
$$\mathcal L_{x,y,z}=(\bullet,\lambda(x,y,z)):x(y+z)\ri xy+xz,$$
$$\mathbf{c}^+_{x,y}=(\bullet,\eta(x,y)):x+y\ri y+x,$$
where $\lambda:R^3\ri M,\eta:R^2\ri M$ are functions satisfying the
appropriate coherence conditions.

 Here are standard
consequences of the axioms of an Ann-category.

\begin{lem}
For every Ann-category $\A$ there exist uniquely isomorphisms
$$
\hat L^A: A\tx 0 \rightarrow 0, \qquad \hat R^A:  0\tx A \rightarrow
0,
$$
where $A\in\Ob(\A)$, such that $\ts$-functors $(L^A,\breve L, \hat
L^A)$ and $(R^A,\breve R, \hat R^A)$ are compatible with unit
constraints $(0,{\bf g,d}).$
\end{lem}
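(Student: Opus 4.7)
\bigskip

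\noindent\textbf{Proof plan.} The strategy is to read off $\hat L^A$ from the fact that $Y := A\tx 0$ satisfies $Y\ts Y\cong Y$ coherently: together with the Picard (group-like) structure on $(\A,\ts)$ this forces $Y\cong 0$. Axiom (Ann-1) will supply the coherence needed to make the resulting isomorphism compatible with the unit constraints ${\bf g}, {\bf d}$, and the parallel construction applied to $R^A=-\tx A$ yields $\hat R^A$.

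Concretely, using ${\bf g}_0^{-1}: 0\ri 0\ts 0$ together with $\breve L^A_{0,0}$, I would form the isomorphism
\[
\sigma_A: Y \xrightarrow{id_A\tx {\bf g}_0^{-1}} A\tx(0\ts 0) \xrightarrow{\breve L^A_{0,0}} Y\ts Y.
\]
Since every object in the Picard category $(\A,\ts)$ is invertible, $\sigma_A$ forces $Y\cong 0$ by the standard Eckmann--Hilton trick: choose an inverse $Y^\ast$ with $\epsilon: Y\ts Y^\ast \xrightarrow{\sim} 0$, and combine $\sigma_A^{-1}$ with $\epsilon$, ${\bf a_+}$, and the unit isomorphism ${\bf d}_Y$ to extract an iso $Y\ri 0$.

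To pin $\hat L^A$ down uniquely I then impose the $\ts$-functor unit axiom: $\hat L^A$ must be the unique iso $A\tx 0\ri 0$ making
\[
\begin{array}{ccc}
A\tx(0\ts X) & \xrightarrow{\breve L^A_{0,X}} & (A\tx 0)\ts(A\tx X)\\
id_A\tx {\bf g}_X\downarrow\phantom{x} & & \phantom{x}\downarrow \hat L^A\ts id\\
A\tx X & \xleftarrow{{\bf g}_{A\tx X}} & 0\ts(A\tx X)
\end{array}
\]
(and the analogous square involving ${\bf d}_X$) commute. The compatibility of $\breve L^A$ with ${\bf a_+}$ supplied by (Ann-1), combined with naturality of ${\bf g}, {\bf d}$ and the triangle coherence of the Picard category, will show that the $\hat L^A$ built above satisfies both squares; and any other candidate differs by an $\ts$-automorphism of $0$, hence coincides with it. The symmetric argument for $R^A$ and $\breve R^A$ in (Ann-1) produces $\hat R^A: 0\tx A\ri 0$.

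The main obstacle is the verification that the single isomorphism coming from the idempotent construction fits \emph{both} unit squares (for ${\bf g}$ and for ${\bf d}$) with the correct orientation. This reduces, by specialising the compatibility of $\breve L^A$ with ${\bf a_+}$ to the triples $(A,0,X)$ and $(A,X,0)$ and invoking the triangle axiom of $(\A,\ts)$, to a formal coherence diagram chase with no conceptual difficulty beyond the bookkeeping.
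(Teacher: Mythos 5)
The paper does not actually prove this lemma --- it is stated as a ``standard consequence'' of the axioms with the proof deferred to \cite{Q1} --- so there is no in-paper argument to compare against. Your route is the standard one for this kind of statement (a monoidal functor into a category whose objects are all $\ts$-invertible preserves the unit up to a unique coherent isomorphism): the idempotent $\sigma_A: A\tx 0\to (A\tx 0)\ts(A\tx 0)$ forces $A\tx 0\cong 0$, and the unit square normalises the isomorphism. The architecture is sound, and the deferred coherence chase (specialising the $(A,X,Y)$-compatibility of $\breve L^A$ with ${\bf a_+}$ to $(A,0,X)$ and $(A,X,0)$) is indeed where the bookkeeping lives.

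One inference as written is not valid, although the conclusion survives. Your uniqueness argument --- ``any other candidate differs by an $\ts$-automorphism of $0$, hence coincides with it'' --- presumes that $0$ has no nontrivial automorphisms. In an Ann-category $\Aut(0)=\pi_1\A$ is precisely the coefficient bimodule $M$ and is nonzero in general (Example 2 of the paper exhibits $\Aut(0)\cong M$ for any $R$-bimodule $M$), so two candidates could a priori differ by a nontrivial element of $M$. The correct argument is that your defining square determines the morphism $\hat L^A\ts id_{A\tx X}$ (the other three sides being fixed isomorphisms), and in a Picard category $-\ts Z$ is an equivalence, hence faithful, so $\hat L^A$ itself is determined; the same faithfulness shows the morphism so defined is independent of $X$, is automatically invertible, and lets you dispense with the explicit choice of an inverse $Y^{\ast}$ and $\epsilon$ in the existence step (different choices there would likewise differ by elements of $\Aut(0)$, so you should define $\hat L^A$ by the square rather than by the idempotent construction). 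Finally, for the second square (the one for ${\bf d}$) you should say how it follows: either invoke the classical redundancy of one unit axiom for a monoidal functor in the presence of the associativity axiom, or use the compatibility of $\breve L^A$ with ${\bf c}$ guaranteed by (Ann-1) together with the relation between ${\bf d}$, ${\bf g}$ and ${\bf c}$ in a Picard category; compatibility with ${\bf a_+}$ and the triangle axiom alone do not obviously deliver it.
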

%%%%%%%%%%%%%%%%%%%%%%%%%%%%%%%%%
It is easy to see that if $(\A,\oplus)$ and $(\A',\oplus)$ are
Gr-categories, then every $\oplus$-functor $(F,\breve
F):\A\rightarrow \A'$, which is compatible with associativity
constraints, is a monoidal functor. Thus, we state the following
definition.

\noindent{\bf Definition 2.} Let $\A$ and $\A'$ be Ann-categories.
An {\it Ann-functor} $(F,\breve F, \Fn, F_\ast):\A\ri \A'$ consists
of a functor $F:\A\rightarrow \A'$, natural isomorphisms
$$\breve F_{X,Y}:F(X\oplus Y)\rightarrow F(X)\oplus F(Y),\ \widetilde{F} _{X,Y}:F(X\otimes
 Y)\rightarrow F(X)\otimes F(Y),$$
 and an isomorphism $F_\ast:F(1)\rightarrow 1'$ such that $(F,\breve F)$ is a symmetric monoidal functor
for the operation  $\ts$, $(F,\Fn, F_\ast)$ is a monoidal functor
for the operation $\tx$, and the following  diagrams commute
{\scriptsize
\[\begin{diagram}
\node{F(X(Y\ts Z))}\arrow{s,l}{F(\Lh)}
\arrow{e,t}{\Fn}\node{FX.F(Y\ts Z)}\arrow{e,t}{id
\tx\Fn}\node{FX(FY\ts FZ)}
\arrow{s,r}{\Lh'}\\
\node{F(XY\ts XZ)}\arrow{e,t}{\breve F}\node{F(XY)\ts F(XZ)}\arrow{e,t}{\Fn\ts\Fn}\node{FX.FY\ts FX.FZ}%\tag{2.1a}
\end{diagram}\]
\[\begin{diagram}
\node{F((X\ts Y)Z)}\arrow{s,l}{F(\Rh)}\arrow{e,t}{\Fn}\node{F(X\ts Y).FZ}\arrow{e,t}{\breve F\tx id}\node{(FX\ts FY)FZ}\arrow{s,r}{\Rh'}\\
\node{F(XZ\ts YZ)}\arrow{e,t}{\breve F}\node{F(XZ)\ts F(YZ)} \arrow{e,t}{\Fn\ts\Fn}\node{FX.FZ\ts FY.FZ.}%\tag{2.1b}
\end{diagram}\]}
%%%%%%%%%%%%%%%%%%%%5

These diagrams are  called the {\it compatibility} of the functor
$F$ with the distributivity constraints.

\indent An {\it Ann-morphism} (or a {\it homotopy})
$$\theta : (F, \Fa, \widetilde{F}, F_{\ast})\rightarrow (F', \breve{F'}, \widetilde{F}', F'_{\ast})$$
between Ann-functors is an  $\oplus$-morphism, as well as an
$\otimes$-morphism.

 If there exist an Ann-functor $(F',
\breve{F'}, \widetilde{F}', F'_{\ast}):\A'\rightarrow \A$ and
Ann-morphisms $F'F\stackrel{\sim}{\rightarrow} id_{\A}, \
FF'\stackrel{\sim}{\rightarrow} id_{\A'}$, we say that
$(F,\Fa,\widetilde{F}, F_{\ast})$ is an {\it Ann-equivalence}, and
$\A$, $\A'$ are {\it Ann-equivalent}.

%%%%%%%%%%%%%%%%%%%%%%%%%%%%%%%%%%%%%%%%%%%%%
For an Ann-category $\A$, the set $R=\pi_0\A$ of isomorphism classes
of the objects  in $\A$ is a ring with two operations $+,\times$
induced by the functors $\ts,\tx$ on $\A$, and the set $M=\pi_1\A =
\Aut(0)$ is a group with the composition denoted by $+$. Moreover,
$M$ is a $R$-bimodule with the actions
  \begin{align}\label{ct0a}
sa=\lambda_X(a), \quad as=\rho_X(a),\nonumber
\end{align}
for $X\in s, s\in\pi_0\A, a\in\pi_1\A$, and $\lambda_X,\rho_X$
satisfy
$$\lambda_X(a)\circ \hat L^X=\hat L^X\circ(id\tx {\bf a}) :X.0\ri 0,$$
$$\rho_X(a)\circ \hat R^X=\hat R^X\circ({\bf a}\tx id) :0.X\ri 0.$$

We recall briefly some main facts of the construction of the reduced
Ann-category $S_{\A}$
 of   $\A$ via the structure transport  (for details, see \cite{Q1}).
 The objects of
$S_{\A}$ are the elements of the ring $\pi_0\A$. A morphism is an
 automorphism $(s,a): s\rightarrow s,\ s\in \pi_0\A, a\in
\pi_1\A$. The composition of morphisms is given by
\[(s,a)\circ (s,b)=(s,a+b).\]

For each  $ s\in \pi_0\A$, choose an object $X_s\in\Ob(\mathcal A)$
such that $X_0=0, X_1=1$,
  and choose an isomorphism $i_X: X\rightarrow X_s$ such that $i_{X_s}=id_{X_s}$.
We obtain two functors
\begin{align*}  \begin{cases}
G:\mathcal A\ri S_{\mathcal A}\\
G(X)=[ X]=s\\
G(X \stackrel {f}{\ri}Y)=(s,\gamma_{X_s}^{-1}(i_Yfi_X^{-1})),
\end{cases}\qquad\qquad
\begin{cases}
H: S_{\mathcal A}\ri  \mathcal A\\
H(s)=X_s\\
H(s,u)=\gamma_{X_s}(u), %\label{eq1}
\end{cases}\end{align*}
\noindent  for $X,Y\in s$, $f: X\rightarrow Y$, and $\gamma_X$
\begin{equation}\label{ld}\gamma_X({\bf a})={\bf g}_X\circ({\bf
a}\ts id)\circ{\bf g}^{-1}_X. \end{equation}

\noindent Two operations on $S_{\A}$ are given by
\begin{eqnarray*}
s\ts t&=&G(H(s)\ts H(t))=s+t,\\
s\tx t&=& G(H(s)\tx H(t))=st,\\
(s,a)\ts (t,b)&=&G(H(s,a)\ts H(t,b))=(s+t,a+b),\\
(s,a)\tx (t,b)&=&G(H(s,a)\tx H(t,b))=(st, sb+at),
\end{eqnarray*}
for $s,t\in \pi_0\A$, $a,b\in \pi_1\A$. Clearly, they do not depend
on the choice of the representative $(X_s, i_X).$

The constraints in $S_{\A}$ are defined by  sticks. A {\it stick} of
$\A$ is a representative $(X_s, i_X)$ such that
\begin{eqnarray*}
i_{0\oplus X_t}={\bf g}_{X_t},\qquad  i_{X_s\oplus 0}={\bf d}_{X_s},&\\
i_{1\otimes X_t}={\bf l}_{X_t}, \qquad i_{X_s\otimes 1}={\bf
r}_{X_s},& i_{0\otimes X_t}=\widehat{R}^{X_t},\quad i_{X_s\otimes
0}=\Lo^{X_s}.
\end{eqnarray*}

 The unit constraints in $S_{\A}$ are $(0, id,id)$ and $(1,id, id)$.
The family of the rest ones, $h=(\xi, \eta, \alpha, \lambda, \rho),$
is defined by the compatibility of the constraints ${\bf a_+, c, a},
\frak{L}, \frak{R}$ of $\A$  with the functor $H$ and isomorphisms
\begin{align}
\breve{H}=i^{-1}_{X_s\oplus X_t}, \widetilde{H}=i^{-1}_{X_s\otimes
X_t}.\label{eq2}
\end{align}
Then $(H, \breve{H}, \widetilde{H}):S_{\A}\ri \A$ is an
Ann-equivalence. Besides, the functor $G: \A\rightarrow S_{\A}$
together with isomorphisms
\begin{align}
\breve{G}_{X,Y}=G(i_X\ts i_Y),\quad \widetilde{G}_{X,Y}=G(i_X\tx
i_Y) \label{eq4}
\end{align}
is also an Ann-equivalence.
 We refer to $S_{\A}$ as an Ann-category of {\it type} $(R,M),$
 and $(H, \breve H,\widetilde{H})$, $(G, \breve G,\widetilde{G})$ are {\it canonical
 } Ann-equivalences.
The family of constraints $h=(\xi, \eta, \alpha, \lambda, \rho)$ of
$S_{\A}$ is called a {\it structure} of the Ann-category of type
$(R, M)$.

Mac Lane \cite{Mac2} and Shukla \cite{Sh} cohomomology groups at low
dimensions are used to classify Ann-categories and regular
Ann-categories, respectively. A structure $h$ of the Ann-category
$S_\mathcal A$ is an element in the group of Mac Lane 3-cocycles
$Z^3_{MacL}(R,M)$. In the case when $\mathcal A$ is regular, $h\in
Z^3_{Shu}(R,M)$.
\begin{pro}[Proposition 11 \cite{Q1}\label{md22}]
Let  $\A$ and $\A'$ be Ann-categories.

\noindent $\mathrm{i)}$ Every Ann-functor
$(F,\Fa,\widetilde{F}):\A\rightarrow \A'$ induces an Ann-functor
$S_F:S_{\A}\rightarrow S_{\A'}$ of type $(p, q),$ where
$$p=F_0:\pi_0\A\ri\pi_0\A',\ [X] \mapsto [FX],$$
$$q=F_1:\pi_1\A\ri\pi_1\A',\ u \mapsto \gamma_{F0}^{-1}(Fu),$$
for $\gamma$ is a map given by the relation $\mathrm{(\ref{ld})}$.

\noindent $\mathrm{ii)}$ $F$ is an equivalence if and only if $F_0,
F_1$ are isomorphisms.

\noindent $\mathrm{iii)}$ The Ann-functor  $S_F$ satisfies

$$S_F=G'\circ F\circ H,$$
\noindent where $H, G'$ are canonical Ann-equivalences.

\end{pro}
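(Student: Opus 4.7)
The plan is to define $S_F$ as the composite $G' \circ F \circ H$, so that (iii) holds by construction, and then to extract (i) and (ii) from this description. Since $(H, \breve H, \widetilde H): S_\A \to \A$ and $(G', \breve{G'}, \widetilde{G'}): \A' \to S_{\A'}$ are the canonical Ann-equivalences recalled just above and $(F, \breve F, \widetilde F, F_\ast)$ is an Ann-functor by hypothesis, the composition $S_F := G' \circ F \circ H$ is again an Ann-functor, with structural isomorphisms assembled from those of $H$, $F$, and $G'$ via the standard composition rules for monoidal functors. This gives part (iii) immediately.

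For (i) I would evaluate $S_F$ on objects and on morphisms. On objects, $S_F(s) = G'(F X_s) = [F X_s] = F_0(s) = p(s)$. On a morphism $(s,a): s \to s$ of $S_\A$, the definition of $H$ gives $H(s,a) = \gamma_{X_s}(a)$, whence $S_F(s,a) = G'(F\gamma_{X_s}(a))$. Unfolding the formula for $G'$ with the stick of $\A'$, applying naturality of $\breve F$, and using the compatibility of $F$ with the left unit constraint $\mathbf{g}$ (which invokes the induced iso $F0 \xrightarrow{\sim} 0'$ coming from $(F, \breve F)$ being a symmetric monoidal functor between Picard categories), the relation $\mathrm{(\ref{ld})}$ reduces this expression to $(p(s), \gamma_{F0}^{-1}(Fa)) = (p(s), q(a))$. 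Hence $S_F$ is of type $(p,q) = (F_0, F_1)$, as required.

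For (ii) I would argue that, because $H$ and $G'$ are Ann-equivalences, $F$ is an Ann-equivalence if and only if $S_F$ is; it then suffices to show that an Ann-functor between reduced Ann-categories of type $(p,q)$ is an equivalence exactly when both $p$ and $q$ are bijections. This is direct: every morphism in $S_\A$ and $S_{\A'}$ is an automorphism of the form $(s,a)$ with $s \in \pi_0$ and $a \in \pi_1$, so surjectivity of $p$ gives essential surjectivity, surjectivity of $q$ gives fullness, and injectivity of $q$ (together with that of $p$, which is forced) gives faithfulness; conversely, if $S_F$ is an equivalence then $p$ and $q$ must both be bijective.

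The main obstacle is the morphism computation in (i): one has to track carefully how the chosen sticks of $\A$ and $\A'$ interact with the structural isomorphisms of $F$, in particular with the induced unit iso $F0 \to 0'$ that appears when transporting $\gamma_{X_s}(a)$ across $F$ and $G'$. Everything else is either formal (closure of Ann-functors under composition, and transport of equivalence along canonical Ann-equivalences) or a routine evaluation at the canonical sticks of $S_\A$ and $S_{\A'}$.
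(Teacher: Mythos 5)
The paper does not actually prove this statement --- it is imported verbatim as Proposition 11 of \cite{Q1} --- so there is no in-paper argument to diverge from; your proposal is the standard proof and it is correct. Defining $S_F:=G'\circ F\circ H$ makes (iii) tautological, the object computation gives $p$, and the morphism computation you flag as the crux does close: writing $\gamma_{X}(a)=\mathbf g_X\circ(a\oplus id)\circ\mathbf g_X^{-1}$, naturality of $\breve F$ together with the compatibility of $(F,\breve F)$ with the unit constraints (via the induced isomorphism $F0\to 0'$) gives $F(\gamma_{X_s}(a))=\gamma_{FX_s}\bigl(\gamma_{F0}^{-1}(Fa)\bigr)$, and conjugating by $i_{FX_s}$ converts $\gamma_{FX_s}$ into $\gamma_{X'_{p(s)}}$, so that $G'$ returns exactly $(p(s),q(a))$. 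For (ii) the only point worth making explicit is that injectivity of $p$ is forced by full faithfulness because distinct objects of $S_{\A'}$ admit no morphisms between them, hence are non-isomorphic; otherwise your reduction along the canonical equivalences is exactly right.
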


Let $\mathcal S=(R,M,h), \mathcal S'=(R',M',h')$ be Ann-categories.
Since $\Fa_{x,y}=(\bullet, \tau(x,y)),$ $
\widetilde{F}_{x,y}=(\bullet, \nu(x,y)),$
 then $g_F=(\tau, \nu)$ is a pair of maps {\it associated} with $(\Fa,
\widetilde{F}),$ we thus can regard an Ann-functor $F:\mathcal
S\rightarrow \mathcal S'$ as a triple $(p, q, g_F).$ It follows from
the compatibility of  $F$ with the constraints that
\begin{align*}
q_{\ast}h-p^{\ast}h'=\partial (g_F), %\label{7}
\end{align*}
where $q_{\ast}, p^{\ast}$ are canonical homomorphisms,
$$Z^{3}_{MacL}(R, M)\stackrel{q_{\ast}}{\longrightarrow}Z^{3}_{MacL}(R, M')\stackrel{p^{\ast}}{\longleftarrow}
 Z^{3}_{MacL}(R', M').$$
  Further, two Ann-functors  $(F,g_F), (F',g_{F'})$  are homotopic if and only if $F=F'$,
 that is, they are the same type of $(p,q)$, and there exists a function  $t:R\rightarrow M'$
  such that
  $g_{F'}=g_F+\partial t.$

We denote by
 $$\mathrm{Hom}_{(p, q)}^{Ann}[\mathcal S, \mathcal S']$$
  the set of homotopy classes of Ann-functors of type $(p,q)$
 from $\mathcal S$ to $\mathcal S'.$

Let $F:\mathcal S\rightarrow \mathcal S'$ be an Ann-functor of type
$(p,q)$, then the function
\begin{align}
k=q_{\ast}h-p^{\ast}h'\in Z^3_{MacL}(R,M') \label{eq5}
\end{align}
 is called an {\it  obstruction} of $F$.

\begin{thm}[Theorem 4.4, 4.5 \cite{Q10}]\label{s63}
A functor  $F:\mathcal S\rightarrow \mathcal S' $ of type $(p, q)$
is an $Ann$-functor  if and only if its obstruction $\overline{k}$
vanishes in $H_{MacL}^3(R, M')$. Then, there exists a bijection
\begin{align*}  \mathrm{Hom}_{(p, q)}^{Ann}[\mathcal S, \mathcal S']\leftrightarrow H^2_{MacL}(R, M')(=H^2_{Shu}(R,M')). %\label{eq6}
\end{align*}

\end{thm}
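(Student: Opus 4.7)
The plan is to show that the seven diagrams in Definition 2, together with the requirement that $(F,\breve F)$ be a symmetric monoidal functor for $\oplus$ and $(F,\widetilde F,F_\ast)$ be a monoidal functor for $\otimes$, translate one-for-one into the cocycle equation $\partial(g_F)=q_\ast h-p^\ast h'$ in the Mac Lane complex $C^\bullet_{MacL}(R,M')$. Once this translation is established, the theorem reduces to a purely cohomological statement.

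First I would fix the reduced form $\mathcal S=(R,M,h)$, $\mathcal S'=(R',M',h')$ with $h=(\xi,\eta,\alpha,\lambda,\rho)$ and $h'=(\xi',\eta',\alpha',\lambda',\rho')$, and a functor $F$ of type $(p,q)$; because objects of $\mathcal S$ are elements of $R$ and morphisms are of the form $(s,a)$, such a functor is automatically determined by $(p,q)$ on objects and morphisms. Since $F$ acts on constraints by conjugation, the natural isomorphisms $\breve F_{x,y}=(\bullet,\tau(x,y))$ and $\widetilde F_{x,y}=(\bullet,\nu(x,y))$ are parametrised by two functions $\tau,\nu:R^2\to M'$, and $F_\ast$ may be normalised to identity using the stick convention. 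The compatibility axioms of $(F,\breve F,\widetilde F,F_\ast)$ with $\mathbf a_+,\mathbf c,\mathbf a,\mathbf l,\mathbf r,\mathfrak L,\mathfrak R$ then translate, diagram by diagram, into exactly the five component equations that define the differential $\partial$ on a Mac Lane $2$-cochain $(\tau,\nu)$, with the correction terms given by $q_\ast h$ (from $\mathcal S$) and $p^\ast h'$ (from $\mathcal S'$). This is the calculational heart of the argument and the step I expect to take the most care: verifying each diagram separately and reading off the correct Mac Lane coboundary formula, while keeping track of signs and the ring-theoretic structure.

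With the identity $\partial(g_F)=q_\ast h-p^\ast h'$ in hand, the equivalence becomes formal. For the ``only if'' direction, any Ann-functor $F$ of type $(p,q)$ yields $g_F$ with $\partial g_F=k$, so $\overline k=0$ in $H^3_{MacL}(R,M')$. For the ``if'' direction, if $\overline k=0$ then $k=\partial t$ for some $2$-cochain $t=(\tau,\nu)$; defining $\breve F_{x,y}=(\bullet,\tau(x,y))$ and $\widetilde F_{x,y}=(\bullet,\nu(x,y))$ reverses the translation and equips $F$ with the structure of an Ann-functor.

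For the second statement, fix any base Ann-functor $F_0$ with data $g_{F_0}$ and of type $(p,q)$. Every other Ann-functor $F$ of the same type has data satisfying $\partial(g_F-g_{F_0})=0$, so $g_F-g_{F_0}\in Z^2_{MacL}(R,M')$. The homotopy criterion recalled just before the theorem (two Ann-functors of the same underlying functor are homotopic iff their data differ by $\partial t$ for some $t:R\to M'$) says exactly that the homotopy class of $F$ depends only on the cohomology class of $g_F-g_{F_0}$. The resulting map
\[
\mathrm{Hom}^{Ann}_{(p,q)}[\mathcal S,\mathcal S']\longrightarrow H^2_{MacL}(R,M'),\qquad [F]\longmapsto [g_F-g_{F_0}],
\]
is well-defined, injective by the homotopy criterion, and surjective because any $2$-cocycle $z$ can be added to $g_{F_0}$ to yield new Ann-functor data $g_{F_0}+z$ (the cocycle condition is preserved). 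In the regular case all constraints satisfy the symmetry condition used in the definition of Shukla cohomology, so $H^2_{MacL}(R,M')=H^2_{Shu}(R,M')$ under the hypothesis, giving the stated alternative description.
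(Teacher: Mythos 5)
This theorem is quoted from \cite{Q10} and the paper gives no proof of its own, but the two facts your argument rests on --- the identity $q_\ast h - p^\ast h' = \partial(g_F)$ coming from the compatibility diagrams, and the criterion that Ann-functors of the same type are homotopic iff their data differ by $\partial t$ --- are exactly the ones the paper records immediately before the statement, and your deduction from them (obstruction vanishing $\Leftrightarrow$ solvability of $\partial(g_F)=k$, plus the torsor argument over $H^2_{MacL}(R,M')$ with a base functor $F_0$) is the standard proof given in the cited source. Your proposal is correct and takes essentially the same approach.
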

%%%%%%%%%%%%%%%%%%%%%%
%\newpage
\section{ Crossed bimodules over rings and regular E-systems}
The results on crossed bimodules can be found in
\cite{B2002,BM2002,P2004,P2008}. We shall show a characteristic of
crossed bimodules when the base ring $\mathbb K$ is the ring of
integers $\mathbb Z$. Based on this characteristic, we can establish
the relation between crossed bimodules over rings and Ann-category
theory in the next section.

 \noindent{\bf Definition}
\cite{P2008}{\bf .} A \emph{crossed bimodule} is a triple $(B,D,d)$,
where $D$ is an associative $\mathbb K$-algebra, $B$ is a
$D$-bimodule and $d: B\rightarrow D$ is a homomorphism of
$D$-bimodules such that
\begin{equation} d(b)b'=bd(b'), \ b,b'\in B.\label{eq14}\end{equation}

A {\it morphism} $(k_1,k_0):(B,D,d)\ri (B',D',d')$ of crossed
bimodules  is a pair  $k_1:B\ri B'$, $k_0:D\ri D'$, where $k_1$ is a
group homomorphism, $k_0$ is a $\mathbb K$-algebra homomorphism such
that
\begin{equation}\label{cb1}
k_0d=d'k_1
\end{equation}
and
\begin{equation}\label{cb2}
k_1(xb)={k_0(x)}k_1(b),\; k_1(bx)= k_1(b){k_0(x)},
\end{equation}
 for all $x\in D,b\in B$.

 The condition \eqref{cb2} shows that $k_1$ is a homomorphism of $D$-bimodules, where $B'$
  is a $D$-bimodule with the action $xb'=k_0(x)b', \ b'x=b'k_0(x)$.

%%%%%%%%%%%%%%%%%
Below, the base ring  $\mathbb K$ is the ring of integers $\mathbb
Z$, and a crossed bimodule $(B,D,d)$ is caled  a {\it crossed
bimodule over rings}. Thus, $D$ is a ring with unit.

In order to introduce the concept of an E-system, we now recall some
 terminologies  due to  Mac Lane \cite{Mac2}. The set of all
bimultiplications of a ring $A$ is a ring denoted by $M_A.$ For each
 element $c$ of $A,$ a bimultiplication $\mu_c$ is defined by
\begin{align*}  \mu_ca=ca,a\mu_c=ac,a\in A \label{eqtt}  \end{align*}
we call $\mu_c$ an {\it inner bimultiplication.} Then $C_A=\{c\in
A|\mu_c=0\}$ is called the {\it bicenter} of $A.$

The bimultiplications $\sigma$ and $\tau$ are  {\it permutable} if
for every  $ a \in A $,
\begin{equation}\label{eq133}
\sigma(a\tau)=(\sigma a)\tau,\;\ \tau(a\sigma)=(\tau a)\sigma.
\end{equation}

%%%%%%%%%%%%%%%%%%%%%%%%%%%%%%%%%%%%%%%%%%%%%%%%%%%%%%

We now introduce  the main concept of the present paper which can be
seen as a version of the concept of a crossed module over rings.

\noindent {\bf Definition 3.} An $E$-{\it system} is a quadruple
$(B,D,d,\theta)$, where $d:B\ri D,\;\theta:D\ri M_B$ are the ring
homomorphisms such that the following diagram commutes
\begin{align}  \begin{diagram}\label{eq12}
\node{B}\arrow{se,b}{\mu}\arrow[2]{e,t}{d}\node[2]{D}\arrow{sw,b}{\theta}\\
\node[2]{M_B}
\end{diagram}\end{align}
and the following relations hold for all $x\in D, b\in B,$
 \begin{align}  d(\theta_xb)=x.d(b),\;\ d(b\theta_x)=d(b).x. \label{eq13}
\end{align}
\indent An E-system $(B,D,d,\theta )$ is  \emph{regular} if $\theta
$ is a 1-homomorphism (a homomorphism carries the identity to the
identity), and the elements of $\theta(D)$ are permutable.

 A {\it morphism} $(f_1,f_0):(B,D,d,\theta)\ri
(B',D',d',\theta')$ of E-systems consists of  ring homomorphisms
$f_1:B\ri B'$, $f_0:D\ri D'$ such that
\begin{equation}\label{gr1}
f_0d=d'f_1
\end{equation}
and  $f_1$ is an {\it operator homomorphism}, that is,
\begin{equation}\label{eq15}
f_1(\theta_xb)=\theta'_{f_0(x)}f_1(b),\; f_1(b\theta _x)=
f_1(b)\theta '_{f_0(x)}.
\end{equation}

In this paper, an E-system $(B,D,d,\theta)$ is sometimes denoted by
 $B\stackrel{d}{\rightarrow}D$, or $B\rightarrow D$.

\noindent {\bf Example 3.}
 If $B$ is a two-sided  ideal in $D$, then $(B,D,d,\theta)$ is a
regular E-system, where $d$ is an inclusion, $\theta:D\ri M_B$ is
given by the {\it bimultiplication type}, that is,
$$\theta_xb=xb,\;b\theta_x=bx,\;\;x\in D, b\in B.$$
\noindent {\bf Example 4.} Let $D$ be a ring, $B$ be a $D$-bimodule,
${\bf 0}:B\ri D$ is the zero homomorphism of $D$-bimodules. $B$  can
be considered as a ring with zero multiplication defined by
$b.b'={\bf 0}(b)b'=b{\bf 0}(b')=0$, for all $b,b'\in B$.
 Then, $(B,D,{\bf 0},\theta)$ is a regular E-system, where $\theta$ is
 given by the action of $D$-bimodules.

\noindent {\bf Example 5.} Let $B$ be a ring, $M_B$ be the ring of
bimultiplications of
 $B$, and
 $\mu:B\ri M_B$ be the homomorphism  which  carries an element
  $b$ in $B$ to an inner bimultiplication of
  $B$. Then $(B,M_B,\mu,id)$ is an E-system. In general, this E-system is not regular.

Standard consequences of the axioms of an E-system are as below.
\begin{pro}\label{md2}
Let $(B,D,d,\theta)$ be an E-system. \\
$\mathrm{i)}\;\; \mathrm{Ker}d\subset C_B$.\\
$\mathrm{ii)}\; \mathrm{Im}d$ is an ideal in $D$.\\
$\mathrm{iii)}$  The homomorphism $\theta$ induces a  homomorphism
$\varphi:D\ri M_{\mathrm{Ker}d}$ given by
$$\varphi_x=\theta_x|_{\mathrm{Ker}d}.$$
$\mathrm{iv)} \;\mathrm{Ker}d$ is a $\mathrm{Coker}d$-bimodule with
the actions
$$sa=\varphi_x(a),\ \ as=(a)\varphi_x, \ \  a\in\mathrm{ Ker}d, \ x\in s\in \mathrm{Coker}d.$$
\end{pro}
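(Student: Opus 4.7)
The plan is to treat the four parts in order, using the defining relations \eqref{eq12} and \eqref{eq13} of an E-system and showing that each subsequent part feeds into the next.

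For (i), pick $a\in\mathrm{Ker}\,d$ and compute the inner bimultiplication $\mu_a$ via the commutative triangle \eqref{eq12}: $\mu_a=\theta_{d(a)}=\theta_0=0$ because $\theta$ is a ring homomorphism. Hence $a\in C_B$. Part (ii) is an immediate consequence of \eqref{eq13}: for $x\in D$ and $b\in B$, the identities $x\cdot d(b)=d(\theta_x b)$ and $d(b)\cdot x=d(b\theta_x)$ exhibit the left and right $D$-translates of elements of $\mathrm{Im}\,d$ as again lying in $\mathrm{Im}\,d$, and additive closure is clear from $d$ being a ring homomorphism.

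For (iii), I first have to check that $\theta_x$ genuinely restricts to a map $\mathrm{Ker}\,d\to\mathrm{Ker}\,d$. For $a\in\mathrm{Ker}\,d$, \eqref{eq13} gives $d(\theta_x a)=x\cdot d(a)=0$ and $d(a\theta_x)=d(a)\cdot x=0$. The bimultiplication axioms for $\varphi_x$ (compatibility with products in $\mathrm{Ker}\,d$) are inherited directly from those of the bimultiplication $\theta_x$ on $B$. Finally $\varphi:D\to M_{\mathrm{Ker}\,d}$ is a ring homomorphism because $\theta$ is and the operations in $M_{\mathrm{Ker}\,d}$ are the restrictions of those in $M_B$.

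Part (iv) is the place that really uses (i), and is the only step where there is anything to watch. I have to see that $\varphi$ descends to $\bar\varphi:\mathrm{Coker}\,d\to M_{\mathrm{Ker}\,d}$, i.e.\ that $\varphi$ vanishes on $\mathrm{Im}\,d$. For $b\in B$ and $a\in\mathrm{Ker}\,d$, the triangle \eqref{eq12} gives $\varphi_{d(b)}(a)=\theta_{d(b)}(a)=\mu_b(a)=ba$, and similarly $(a)\varphi_{d(b)}=ab$; by (i), $a\in C_B$ so both of these vanish. Thus $\varphi_{d(b)}=0$, so the rule $sa=\varphi_x(a)$, $as=(a)\varphi_x$ for any representative $x\in s$ is well-defined, and the bimodule axioms follow from $\bar\varphi$ being a ring homomorphism into $M_{\mathrm{Ker}\,d}$.

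The one genuinely non-formal step is the well-definedness in (iv); everything else is a direct unpacking of the axioms \eqref{eq12}, \eqref{eq13} together with the fact that $\theta$ is a ring homomorphism into $M_B$.
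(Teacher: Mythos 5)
The paper offers no proof of this proposition (it is introduced as ``standard consequences''), so there is nothing to compare against line by line; your arguments for (i), (ii), (iii) and for the well-definedness step in (iv) are the expected ones and are correct. In particular the chain $\mu_a=\theta_{d(a)}=\theta_0=0$ for (i), the use of \eqref{eq13} for (ii) and for the stability of $\mathrm{Ker}\,d$ under $\theta_x$, and the computation $\varphi_{d(b)}(a)=\mu_b(a)=ba=0$ (valid because $a\in C_B$ by (i)) for the descent of $\varphi$ to $\mathrm{Coker}\,d$ are exactly what is needed.

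There is, however, a genuine gap in your last step of (iv): the claim that ``the bimodule axioms follow from $\bar\varphi$ being a ring homomorphism into $M_{\mathrm{Ker}\,d}$.'' A ring homomorphism into the ring of bimultiplications gives you additivity in both variables and the identities $(st)a=s(ta)$ and $a(st)=(as)t$ (since the product in $M_A$ is defined by $(\sigma\tau)a=\sigma(\tau a)$, $a(\sigma\tau)=(a\sigma)\tau$), i.e.\ a left module and a right module structure. It does \emph{not} give the mixed compatibility $(sa)t=s(at)$, which is precisely the permutability condition \eqref{eq133} for $\varphi_x$ and $\varphi_y$ as bimultiplications of $\mathrm{Ker}\,d$ --- the very condition the paper isolates and imposes only for \emph{regular} E-systems. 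Nor is it automatic on $\mathrm{Ker}\,d$ for a general E-system: take $B=\ZZ^2$ with zero multiplication (so $M_B$ consists of arbitrary pairs $(\lambda,\rho)$ of additive endomorphisms, multiplied by $(\lambda,\rho)(\lambda',\rho')=(\lambda\lambda',\rho'\rho)$), $d=0:B\ri D$ with $D=\ZZ[t]$, and $\theta(t)=(\lambda,\rho)$ with $\lambda\rho\neq\rho\lambda$; all E-system axioms hold trivially, $\mathrm{Ker}\,d=B$, but $(ta)t=\rho\lambda(a)\neq\lambda\rho(a)=t(at)$. So either part (iv) tacitly assumes regularity (in which case you should simply invoke the permutability of $\theta(D)$, which restricts to $\mathrm{Ker}\,d$), or ``bimodule'' must be read weakly as a simultaneous left and right module structure; in neither case does the compatibility axiom come for free from $\bar\varphi$ being a ring homomorphism, and your proof should say which of these it is using.
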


To state the relation between regular E-systems and crossed
bimodules over rings, one recalls the following definition.

\noindent{\bf Definition 4.} A functor $\Phi:\mathbf C\ri \mathbf
C'$ is an isomorphism of categories if  it is bijective on objects
and on morphism sets.

\begin{thm}\label{md1} The categories of regular E-systems and of crossed bimodules over rings are
isomorphic.
\end{thm}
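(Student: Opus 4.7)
The plan is to construct two explicit functors $\Phi$ from regular E-systems to crossed bimodules over rings and $\Psi$ in the reverse direction, and to check that they are mutually inverse on both objects and morphism sets. On objects, given a regular E-system $(B,D,d,\theta)$, set $\Phi(B,D,d,\theta) = (B,D,d)$ with $B$ equipped with the $D$-bimodule structure $x\cdot b := \theta_x b$, $b\cdot x := b\theta_x$. Conversely, given a crossed bimodule $(B,D,d)$, set $\Psi(B,D,d) = (B,D,d,\theta)$ by endowing $B$ with the multiplication $b\cdot_B b' := d(b)b' = bd(b')$, well-defined by \eqref{eq14}, and declaring $\theta_x b := xb$, $b\theta_x := bx$. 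On morphisms, both functors send a pair $(f_1,f_0)$ to itself.

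First I would verify that each functor produces an object of the correct type. For $\Phi$, unitality of $\theta$ (regularity) and the fact that it is a ring homomorphism yield the associativity laws of the left and right $D$-actions on $B$; permutability of the elements of $\theta(D)$ supplies the remaining compatibility $x(by)=(xb)y$; and \eqref{eq13} says exactly that $d$ is a $D$-bimodule homomorphism. The crossed-bimodule identity $d(b)b'=bd(b')$ follows from the commutative triangle \eqref{eq12}: both sides collapse to $\mu_b b' = b\mu_{b'}$, which is the ring product in $B$. For $\Psi$, the induced multiplication on $B$ is associative and satisfies $d(b\cdot_B b')=d(b)d(b')$ because $d$ is a $D$-bimodule map; a direct calculation using the bimodule axioms together with \eqref{eq14} shows that each $\theta_x$ is a bimultiplication of $B$; then $\theta$ is a ring $1$-homomorphism, its image consists of mutually permutable bimultiplications (by the commutativity of the left and right $D$-actions), and the triangle \eqref{eq12} is exactly the equation $\theta_{d(b)} = \mu_b$.

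The morphism correspondence is where the main subtlety lies: an E-system morphism requires $f_1$ to be a \emph{ring} homomorphism of $B$, whereas a crossed-bimodule morphism demands only that $k_1$ be a group homomorphism respecting the $D$-bimodule structure. To show these conditions become equivalent under the object-level bijection, I compute in one direction
\[
k_1(b\cdot_B b') = k_1(d(b)b') = k_0(d(b))k_1(b') = d'(k_1(b))k_1(b') = k_1(b)\cdot_{B'} k_1(b'),
\]
using \eqref{cb1} and \eqref{cb2}, so $k_1$ is automatically a ring homomorphism for the induced multiplication; conversely, \eqref{eq15} is literally the statement that $f_1$ is $D$-bilinear in the induced bimodule structure, while \eqref{gr1} coincides with \eqref{cb1}. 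Combined with the evident equalities $\Phi\Psi = \mathrm{id}$ and $\Psi\Phi = \mathrm{id}$ on the underlying data (so both functors preserve composition and identities), this gives a bijection on morphism sets in the sense required by Definition~4, and hence an isomorphism of categories. I expect the bulk of the work to be the routine but multi-case verification that each $\theta_x$ coming from $\Psi$ satisfies all three bimultiplication axioms on $B$, since it is there that the crossed-bimodule equation \eqref{eq14} has to be used in its full strength.
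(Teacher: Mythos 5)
Your proposal is correct and follows essentially the same route as the paper: the same object-level correspondence (bimodule actions via $\theta$ in one direction, the induced multiplication $b\ast b'=d(b)b'=bd(b')$ in the other, with \eqref{eq12} mediating between $\mu_b$ and $\theta_{d(b)}$), and the same key computation showing that a crossed-bimodule morphism $k_1$ is automatically a ring homomorphism via \eqref{cb1} and \eqref{cb2}. The paper is slightly more terse about the bimultiplication verifications you flag as the bulk of the work, but there is no substantive difference in approach.
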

\begin{proof}  Let $_{s}B=(B,D,d,\theta)$ be a regular E-system. The abelian additive group
 $B$ is a $D$-bimodule with the actions
\begin{equation}\label{td}
xb=\theta_xb,\;\;\;bx=b\theta_x,
\end{equation}
for $x\in D, b\in B.$ It is then easy to check that the axioms of a
crossed bimodule hold. For example, the relation \eqref{eq14}
follows from the relation \eqref{eq12},
\begin{equation} d(b)b'= \theta _{d(b)}(b')\stackrel{\eqref{eq12} }{=} \mu_{b}(b')= bb'
=b\mu_{b'}\stackrel{\eqref{eq12} }{=}b\theta_ {d(b')}=bd(b'), \notag
\end{equation}
since $\mu_{b}, \mu_{b'}$ are inner bimultiplications of the ring
$B$. Besides, the regularity of the E-system $(B,D,d,\theta)$ is
necessary and sufficient  for the two-sided module $B$ to be a
$D$-bimodule.

Conversely, if  $_{c}B=(B,D,d)$ is a crossed bimodule then $B$ has a
ring structure with the multiplication
 \begin{equation}\label{pn}
b\ast b':=d(b)b'=bd(b'),\;b,b'\in B.
 \end{equation}
Clearly, $d: B\rightarrow D$ is a ring homomorphism since  %$D$-bimodule nªn tr­íc hÕt $d$ lµ mét ®ång cÊu gi÷a c¸c nhãm céng.
 for all $b,b'\in B$,
 $$d(b\ast b')=d(d(b)b')=d(b)d(b').$$
The map $\theta:D\rightarrow M_B$ is defined by the $D$-bimodule
actions (\ref{td}). Then, $\theta$ is a homomorphism with image in
$M_B$, the elements of $\theta(D)$ are permutable  since $B$ is a
$D$-bimodule. The homomorphism $\theta$ satisfies the condition
 \eqref{eq13} since $d$ is a homomorphism of bimodules.
Thus, the correspondence   $_{s}B\mapsto\ _{c}B$ is bijective on
objects.

Now, if $(f_1,f_0):(B,D,d,\theta)\ri (B', D', d', \theta')$ is a
morphism of E-systems, it is then clear that $(f_1, f_0)$ satisfies
the relation (\ref{cb1}).

Further, for all $x\in D, b\in B$, one has
$$f_1(xb)\stackrel{(\ref{td})}{=}f_1(\theta_xb)\stackrel{(\ref{eq15})}{=}\theta'_{f_0(x)}f_1(b)\stackrel{(\ref{td})}{=}f_0(x)f_1(b)=xf_1(b).$$
Similarly, one obtains $f_1(bx)=f_1(b)x.$ This means that the pair
$(f_1,f_0)$ is a morphism of crossed bimodules.

 \indent Conversely, let
$(k_1,k_0):(B,D,d)\ri (B',D',d')$ be a morphism of crossed
bimodules. We show that $k_1$ is a ring homomorphism. According to
the determination of the multiplication on the ring $B$, we have
$$k_1(b\ast b')\stackrel{(\ref{pn})}{=}k_1(d(b)b')\stackrel{(\ref{cb2})}{=}k_0(d(b))k_1(b')\stackrel{(\ref{cb1})}{=}
d'(k_1(b))k_1(b')\stackrel{(\ref{pn})}{=}k_1(b)\ast k_1(b'),$$ for
all $b,b'\in B$. Besides, the pair $(k_1,k_0)$  also satisfies
(\ref{eq15}).
\end{proof}

By the above proposition, the notion of an E-system can be seen as a
weaken version of the notion of a crossed bimodule over rings.

We now discuss the relationship among the above concepts and  the
concept of a crossed module of $D$-structures in the category
$\mathbf C$ of $\Omega$-groups (see \cite{TP}). For convenience,
such a crossed module is called a crossed $\mathbf C$-module. T.
Porter proved that there is an equivalence between the category of
crossed $\mathbf C$-modules and that of internal categories in
$\mathbf C$. A crossed $\mathbf C$-module can be described as
follows.

\begin{pro}[Proposition 2 \cite{TP}] Given a $D$-structure on $B$,
$d:B\ri D$ is a crossed $\mathbf C$-module if and only if the
following conditions are satisfied for all $b,b_1,b_2\in B, x\in
D,\ast\in\Omega'_2\subset\Omega$

$\mathrm{(i).}$ $d((-x)\cdot b)=-x+d(b)+x$,

$\mathrm{(ii).}$ $(-d(b_1))\cdot b_2=-b_1+b_2+b_1$,

$\mathrm{(iii).}$ $d(b_1)\ast b_2=b_1\ast b_2=b_1\ast d(b_2)$,

$\mathrm{(iv).}$ $\begin{cases}d(xb)=x\ast d(b)\\ d(bx)=d(b)\ast
x.\end{cases}$
\end{pro}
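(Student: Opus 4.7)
The plan is to unpack Porter's abstract definition of a crossed $\mathbf C$-module in the case where $\mathbf C$ is the category of $\Omega$-groups and match each abstract axiom to exactly one of the four identities (i)--(iv), then run the translation backward for the converse.

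A $D$-structure on $B$ supplies an action $x\cdot b$ arising from the underlying group operation of $\Omega$-groups, together with two-sided actions $x\ast b$, $b\ast x$ for each binary $\ast\in\Omega'_2$; a crossed $\mathbf C$-module is then a morphism $d:B\to D$ in $\mathbf C$ that is $D$-equivariant for every operation and satisfies a Peiffer-type identity for every operation. For the underlying group structure, equivariance of $d$ reads $d(x\cdot b)=x+d(b)-x$, so replacing $x$ by $-x$ yields (i); the Peiffer identity for the group law, $d(b_1)\cdot b_2=b_1+b_2-b_1$, becomes (ii) after replacing $b_1$ by $-b_1$ and using $d(-b_1)=-d(b_1)$. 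For each non-group binary operation $\ast\in\Omega'_2$, I would read (iii) off as the Peiffer-type condition $d(b_1)\ast b_2=b_1\ast b_2=b_1\ast d(b_2)$ and (iv) as the $\ast$-equivariance of $d$.

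The converse direction runs each of these translations backward: since the substitutions $x\mapsto -x$ and $b_1\mapsto -b_1$ are invertible and $d$ is a morphism of $\Omega$-groups, (i) and (ii) recover the equivariance and Peiffer identity for the group action, while (iii) and (iv) do the same for every $\ast\in\Omega'_2$. The main obstacle I anticipate is checking completeness, namely that (i)--(iv) really capture every axiom required by Porter's uniform definition; one must confirm that no further condition is needed for nullary or unary operations of $\Omega$, and that cross-compatibility of the $D$-actions for distinct operations follows automatically from the fact that $B$ and $D$ are $\Omega$-groups and that $d$ respects all of $\Omega$. This reduces to tracking Porter's Peiffer formalism case by case through the two types of binary operation present on an $\Omega$-group, the additive group law and each $\ast\in\Omega'_2$.
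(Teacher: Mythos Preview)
The paper does not supply a proof of this statement: it is quoted as Proposition~2 of Porter~\cite{TP} and used as a black box, with only the explanatory remarks about the actions $x\cdot b$ and $x\ast b$ appended before the paper moves on to its own Proposition (that every crossed $\mathbf C$-module is a crossed bimodule over rings). There is therefore nothing in the paper against which to compare your argument.

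On its own terms your outline is a plausible reconstruction of Porter's proof: you correctly read (i) and (ii) as the group-level equivariance and Peiffer conditions after the substitutions $x\mapsto -x$ and $b_1\mapsto -b_1$, and (iii), (iv) as their analogues for each binary $\ast\in\Omega'_2$. The caveat you yourself raise---that one must check no further axiom is required for nullary, unary, or higher-arity operations in $\Omega$, and that no cross-compatibility condition between distinct operations is missing---is precisely where the real work in Porter's argument lies, and your sketch defers rather than resolves it. A complete proof would unfold Porter's split-epimorphism/internal-category description of a crossed module in $\mathbf C$ and verify term by term that the resulting identities collapse to exactly (i)--(iv) under the $\Omega$-group axioms; without that bookkeeping the ``if'' direction remains a promissory note.
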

\noindent Here $\ast$ is a binary operation which is not the group
operation  $+$, the actions $x\cdot b, x\ast b$ are given by
$$x\cdot b=s(x)+b-s(x),$$
$$x\ast b=s(x)\ast b,$$
where $s$ is the morphism in the split exact sequence
$$0\ri B\stackrel{i}{\ri}E\underset{s}{\stackrel{p}{\rightleftarrows}}D\ri
0.$$

To establish the link between these crossed $\mathbf C$-modules and
crossed modules over rings, we take $\mathbf C$ to be a category
whose objects are rings. The morphisms of $\mathbf C$ are ring
homomorphisms which are not necessarily 1-homomorphisms.

\begin{pro}
Every crossed $\mathbf C$-module is a crossed bimodule over rings.
\end{pro}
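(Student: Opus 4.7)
The plan is to translate Porter's four axioms (i)--(iv) into the three clauses of the paper's Definition of a crossed bimodule in Section~3: that $B$ carries a $D$-bimodule structure, that $d:B\to D$ is a $D$-bimodule homomorphism, and that the Peiffer identity \eqref{eq14} holds.

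First I unpack the data. A $D$-structure on $B$ in the category $\mathbf{C}$ of rings is encoded by a split short exact sequence
$$0\longrightarrow B\stackrel{i}{\longrightarrow}E\underset{s}{\stackrel{p}{\rightleftarrows}}D\longrightarrow 0,$$
and Porter's two actions take the form $x\cdot b=s(x)+b-s(x)$ and $x\ast b=s(x)\ast b$. Since $E$ is abelian under $+$, the first collapses to $x\cdot b=b$, so conditions (i) and (ii) become tautologies and carry no information in the ring setting; only (iii) and (iv), both involving the multiplicative action $x\ast b$, carry content.

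Second, using that $B=\ker p$ is a two-sided ideal of $E$, I define the candidate $D$-bimodule structure on $B$ by
$$xb := s(x)\ast b,\qquad bx := b\ast s(x),\qquad x\in D,\ b\in B,$$
with the right-hand sides computed in $E$. Bilinearity and the associativity laws $(xy)b=x(yb)$, $b(xy)=(bx)y$, $(xb)y=x(by)$ then follow from $s$ being a ring homomorphism together with the ring axioms in $E$. Condition (iv) of Porter reads $d(xb)=x\,d(b)$ and $d(bx)=d(b)\,x$, i.e. $d$ is a $D$-bimodule homomorphism. Condition (iii) unfolds to
$$d(b_1)\cdot b_2 = s(d(b_1))\ast b_2 = b_1\ast b_2 = b_1\ast s(d(b_2)) = b_1\cdot d(b_2),$$
which is exactly \eqref{eq14}; as a bonus, the middle equality identifies the ring multiplication on $B$ inherited from $E$ with the action of $d(B)\subset D$, so the two meanings of juxtaposition are consistent.

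The only real subtlety is keeping straight Porter's dictionary between the symbol $\ast$ as multiplication in $E$ and the induced $D$-bimodule operations on $B$, which are obtained by pushing $D$ into $E$ via the section $s$; once this dictionary is fixed, each verification is a single-line computation, and I do not anticipate any substantive obstacle.
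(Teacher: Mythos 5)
Your proof is correct and, if anything, addresses the literal statement more directly than the paper does. The paper's own proof takes the E-system route: it defines $\theta:D\to M_B$ by $\theta_x(b)=s(x)b$, $(b)\theta_x=bs(x)$, checks $\theta\circ d=\mu$ from Porter's condition (iii) and $d(\theta_x b)=x\,d(b)$ from (iv), and concludes that $(B,D,d,\theta)$ is an E-system; the $D$-bimodule structure, the equivariance of $d$, and the Peiffer identity \eqref{eq14} are only discussed informally in the paragraph following the proof. You instead verify the crossed-bimodule clauses head-on: the $D$-bimodule structure induced via the section $s$ (using that $B=\ker p$ is an ideal of $E$), condition (iv) read as the statement that $d$ is a bimodule homomorphism, and condition (iii) read as \eqref{eq14}. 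The underlying computations coincide --- both arguments amount to pushing $D$ into $E$ along $s$ and using associativity in $E$ --- so the difference is largely one of packaging, but your packaging matches the proposition as stated, and your observation that (i) and (ii) are vacuous because the additive group of a ring is abelian is a worthwhile point the paper leaves implicit. One caveat applies equally to both arguments: since $s$, like all morphisms of $\mathbf C$, need not preserve the unit, $D$ need not be unitary and $B$ need not be a unitary $D$-bimodule, so the conclusion holds only in the non-unitary sense; the paper concedes exactly this in the remarks after its proof, where it downgrades the claim to satisfying ``most of the conditions'' of a crossed bimodule, and correspondingly the associated E-system need not be regular.
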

\begin{proof}
Let $d:B\ri D$ be a crossed $\mathbf C$-module. Then $d$ is a ring
homomorphism, and $D$ acts on $B$ by
\begin{equation}\label{pt}
xb=s(x)b,\ bx=bs(x),\ x\in D, b\in B.
\end{equation}
The map $\theta:D\ri M_B$ is given by
$$
\theta_x(b)=xb, (b)\theta_x=bx.
$$
Since $s$ is a ring homomorphism, so is $\theta$. The relation
(\ref{eq12}) follows from the condition  (iii). Indeed, for $b,b'\in
B$
$$(\theta d)(b)(b')=\theta_{db}(b')=(db)b'=bb'=\mu_b(b').$$
 It follows from (iv) that
$d(\theta_x(b))=d(xb)=xd(b)b.$ This means the relation (\ref{eq13})
holds, and therefore  $(B,D,d,\theta)$ is an E-system.
\end{proof}
One can see that a crossed $\mathbf C$-module $d:B\ri D$ satisfies
most of the conditions of a crossed bimodule over rings. We first
see that $B$ is a $D$-bimodule with the action \eqref{pt} By (iv),
the ring homomorphism $d:B\ri D$ is a $D$-bimodule. The relation
(\ref{eq14}) follows directly from the condition (iii). Note that
the ring $D$ is not necessarily unitary and if it has a unit, the
ring $B$ is not assumed to be a unitary $D$-bimodule. These
investigations show that the concept of a crossed $\mathbf C$-module
can be seen as a weaken version of the concept of a crossed bimodule
over rings.

\noindent{\bf Remark.} Since $\mathbf C$ can be any of categories of
$\Omega$-groups, use of crossed $\mathbf C$-modules has resulted in
various contexts. However, in each particular case there is a
certain restriction. For example, by Proposition 3 \cite{TP} Ker$d$
is singular; while for crossed modules over groups, (or crossed
modules over rings)  Ker$d$ is a subgroup in the center (or the
bicenter) of $B$.

Since rings with unit are not $\Omega$-groups, one can not seek a
relation among the category of  crossed $\mathbf C$-modules,
cohomology of algebras and cohomology of rings.

\section{ Strict Ann-categories and E-systems}
Crossed modules over groups are often studied in the form of strict
2-groups (see \cite{Baez, Br76, Br94}). In this section, we prove
that E-systems and strict Ann-categories are equivalent.

For every E-system $(B,D,d, \theta)$ we can construct a strict
Ann-category $\mathcal A=\mathcal A_{B\rightarrow D},$ called the
 Ann-category  {\it associated to} the
E-system $(B,D,d,\theta)$, as follows. One sets
$$\mathrm{Ob}(\mathcal A)=D,$$
 and for two objects $x,y$ of $\mathcal A$,
 $$\mathrm{Hom}(x,y)=\{b\in B/y=d(b)+x\}.$$
 The composition of  morphisms is given by
$$(x\stackrel{b}{\ri}y\stackrel{c}{\ri}z)=(x\stackrel{b+c}{\ri}z).$$
\noindent Two operations $\oplus, \otimes$ on objects are given by
the operations $+,\times$ on the ring $D$. For the morphisms, we set
$$(x\stackrel{b}{\rightarrow}y)\ts(x'\stackrel{b'}{\rightarrow}y')=(x+x'\stackrel{b+b'}{\longrightarrow} y+y'),$$
$$(x\stackrel{b}{\rightarrow}y)\tx(x'\stackrel{b'}{\rightarrow}y')=
(xx'\xrightarrow{bb'+b\theta_{x'}+\theta_xb'} yy').$$

Based on the definition of an E-system, it is easy to verify that
$\mathcal A$ is an Ann-category with the  strict constraints.

Conversely, for every strict Ann-category $(\mathcal A,\ts,\tx)$ one
can define an E-system $C_\mathcal A=(B,D,d,\theta )$. Indeed, let
\begin{equation}   D=\mathrm{Ob}(\mathcal A),\;\; B=\{0\xrightarrow{b}  x|\ x\in D\}.\notag \end{equation}
Then, $D$ is a ring with two operations
\begin{align*} x+y=x\ts y,\;\; xy=x\tx y,  \end{align*}
and $B$ is a ring with two operations
\begin{align*} b+c = b\ts c,\;\; bc=b\tx c. \end{align*}

The homomorphisms $d: B\rightarrow D$ and $\theta : D\rightarrow
M_B$ are defined by
\begin{align*} d(0\xrightarrow{b}  x)=x, \end{align*}
\begin{equation}  \theta _y(0\xrightarrow{b} x)=(0\xrightarrow{id_y \tx b}  yx), \notag \end{equation}
\begin{equation} (0\xrightarrow{b}  x)\theta _y=(0\xrightarrow{b\tx id_y}  yx). \notag \end{equation}
The quadruple  $(B,D,d,\theta )$ defined as above is an E-system.

In the following lemmas, let $\mathcal A_{B\ri D}$ and $\mathcal
A_{B'\ri D'}$ be Ann-categories associated to E-systems
$(B,D,d,\theta)$ and $(B',D',d',\theta')$, respectively.

\begin{lem}\label{t1}
Let $(f_1,f_0):(B,D,d,$ $\theta)\ri(B',D',d',\theta')$ be a morphism
of E-systems.

\noindent $\mathrm{i)}$ There is a functor $F:\mathcal A_{B\ri
D}\ri\mathcal A_{B'\ri D'}$
 defined by
$$F(x)=f_0(x),\ F(b)=f_1(b),\ x\in\mathrm{ Ob}(\mathcal A_{B\ri
D}), b\in\mathrm{ Mor}(\mathcal A_{B\ri D}).$$
 $\mathrm{ii)}$ The functor $F$ together with  isomorphisms
$\breve{F}_{x,y}: F(x+y)\ri Fx+Fy$, $\widetilde{F}_{x,y}:F(xy)\ri
FxFy$  is an Ann-functor if   $\breve{F}_{x,y}$ and
$\widetilde{F}_{x,y}$ are constants in $\mathrm{Ker}d'$ and for all
$x,y\in D$ the following conditions hold:
\begin{equation}\label{ttfn}\theta'_{Fx}(\widetilde{F})=(\widetilde{F})\theta'_{Fy}=\widetilde{F},\end{equation}
\begin{equation}\label{ttfa}\theta'_{Fx}(\breve{F})=(\breve{F})\theta'_{Fy}=\breve{F}+\widetilde{F}.\end{equation}
\end{lem}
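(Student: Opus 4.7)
For part (i), I would verify directly that $F$ is a well-defined functor. If $b:x\to y$ is a morphism in $\mathcal A_{B\to D}$, then by definition $y=d(b)+x$; applying $f_0$ and using the relation $f_0 d = d' f_1$ from \eqref{gr1} gives $f_0(y)=d'(f_1(b))+f_0(x)$, so $f_1(b)$ is indeed a morphism $Fx\to Fy$ in $\mathcal A_{B'\to D'}$. Functoriality is then immediate: composition in both categories is addition in the abelian morphism group (composition of $b$ and $c$ gives $b+c$), so $F(b+c)=f_1(b)+f_1(c)=Fb\circ Fc$ since $f_1$ is a group homomorphism, and $F$ sends identities $0_x$ to $f_1(0)=0=id_{Fx}$.

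For part (ii), I would begin by noting that since $f_0$ is a ring homomorphism, $F(x+y)=Fx+Fy$ and $F(xy)=Fx\cdot Fy$ as objects, so asking that $\breve F_{x,y}$ and $\widetilde F_{x,y}$ lie in $\Ker d'$ is exactly the condition that they are automorphisms of these common source-and-target objects. Naturality of $\breve F$ and $\widetilde F$ is then automatic: writing composition additively, the two sides of each naturality square become $\breve F+f_1(b+c)$ and $f_1(b+c)+\breve F$ (resp. for $\widetilde F$), which agree by commutativity of addition in $B'$.

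Next I would check the monoidal-functor axioms. Since both Ann-categories are strict and $F_*$ can be taken to be the identity, compatibility of $(F,\breve F)$ with $\mathbf{a_+},\mathbf{c},\mathbf{g},\mathbf{d}$ collapses to trivial equalities in the abelian group $B'$ (for instance the associativity pentagon for $\breve F$ reduces to $\breve F+(\breve F\oplus 0)=\breve F+(0\oplus \breve F)$). Compatibility of $(F,\widetilde F,F_*)$ with the $\otimes$-associator is handled by \eqref{ttfn}: the tensor formula gives $id_{FX}\otimes \widetilde F=\theta'_{Fx}(\widetilde F)$ and $\widetilde F\otimes id_{FY}=(\widetilde F)\theta'_{Fy}$ (the $bb'$ term vanishes because $\widetilde F\in\Ker d'$ and the multiplication in $B'$ factors through $d'$ by \eqref{pn}), so the associativity square becomes $\widetilde F+\theta'_{Fx}(\widetilde F)=\widetilde F+(\widetilde F)\theta'_{Fy}$, which is precisely \eqref{ttfn}.

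The main obstacle is verifying the two distributivity diagrams of Definition 2. In the strict setting $\Lh=\Lh'=id$ and $\Rh=\Rh'=id$, so each diagram reduces to an equation in $B'$. Using the tensor formula
\[
(x\xrightarrow{b}y)\otimes(x'\xrightarrow{b'}y')=(xx'\xrightarrow{bb'+b\theta_{x'}+\theta_x b'}yy'),
\]
together with $\breve F,\widetilde F\in\Ker d'$, one obtains $id_{FX}\otimes \breve F=\theta'_{Fx}(\breve F)$ and $\breve F\otimes id_{FA}=(\breve F)\theta'_{Fa}$. Substituting, the first (left) distributivity diagram reduces to $\widetilde F+\theta'_{Fx}(\breve F)=\breve F+(\widetilde F\oplus \widetilde F)=\breve F+2\widetilde F$, which using \eqref{ttfa} becomes the identity $\widetilde F+\breve F+\widetilde F=\breve F+2\widetilde F$ in the abelian group $B'$; the right distributivity diagram reduces analogously using the second equality in \eqref{ttfa}. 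This last step is essentially bookkeeping once the tensor formula has been applied, but organizing the four terms in each diagram and tracking the correct instance of $\breve F,\widetilde F$ is the delicate part.
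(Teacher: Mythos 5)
Your proposal is correct and follows essentially the same route as the paper: show $\breve F,\widetilde F\in\Ker d'\subset C_{B'}$, deduce naturality from the constancy of $\breve F,\widetilde F$ together with $F(b\oplus b')=F(b)\oplus F(b')$ and $F(b\otimes b')=F(b)\otimes F(b')$ (the latter needs the operator-homomorphism condition \eqref{eq15}, which your ``(resp.\ for $\widetilde F$)'' should invoke explicitly), and reduce the associativity and distributivity diagrams to \eqref{ttfn} and \eqref{ttfa}. You are in fact more explicit than the paper on that last reduction, which the paper only asserts.
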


Then, we say that $F$ is an Ann-functor of {\it form} $(f_1,f_0)$.
\begin{proof}

i) Every element $b\in B$ can be considered as a morphism
$(0\stackrel{b}{\ri} db)$ in $\mathcal A_{B\ri D}$. Then,
$(F0\stackrel{F(b)}{\ri} F(db))$ is a morphism in $\mathcal A_{B'\ri
D'}.$
 By the construction of the
Ann-category associated to an E-system, $F$ is a functor.

\noindent ii) We define the natural isomorphisms
$$\breve{F}_{x,y}: F(x+y)\rightarrow F(x)+ F(y),\; \widetilde{F}_{x,y}:F(xy)\ri F(x)F(y)$$
such that $F=(F,\breve{F},\widetilde{F})$ becomes an Ann-functor.
First we  see that
$$F(x)+F(x')=F(x+x'),$$
so $d'(\breve{F}_{x,x'})=0$. Analogously,
$d'(\widetilde{F}_{x,x'})=0,$ thus
\begin{equation}
\breve{F}_{x,x'},\widetilde{F}_{x,x'}\in \mathrm{Ker}d'\subset
C_{B'}.\label{eq1a}
\end{equation}
Now, for two morphisms $(x\stackrel{b}{\ri}y)$ and
$(x'\stackrel{b'}{\ri}y')$
 in $\mathcal A_{B\ri
D}$, we have:
\[ \bullet\ F(b\oplus
b')=F(x+x'\xrightarrow{b+b'}
y+y')=\big(f_0(x+x')\xrightarrow{f_1(b+b')} f_0(y+y')\big),\]
\[\begin{aligned}F(b)\oplus F(b')&=\big(f_0(x)\xrightarrow{f_1(b)} f_0(y)\big)\oplus
\big(f_0(x')\xrightarrow{f_1(b')} f_0(y')\big)\\
\;&=\big(f_0(x)+f_0(x')\xrightarrow{f_1(b)+f_1(b')}
f_0(y)+f_0(y')\big).\end{aligned}\] Since $f_1$ is a ring
homomorphism, one obtains
\begin{equation}F(b\oplus b')=F(b)\oplus F(b'). \label{eq16}
\end{equation}
By (\ref{eq1a}) and (\ref{eq16}), the commutativity of the diagram
\begin{equation}\label{bdfa}\begin{diagram}
\node{F(x+x')}\arrow{e,t}{\breve{F}_{x,x'}}\arrow{s,l}{F(b\oplus
b')}\node{F(x)+F(x')}\arrow{s,r}{F(b)\oplus F(b')}\\
\node{F(y+y')}\arrow{e,b}{\breve{F}_{y,y'}}\node{F(y)+F(y')}
\end{diagram}\end{equation}
follows from $\breve{F}_{x,x'}=\breve{F}_{y,y'}$.
\[ \bullet \ F(b\otimes
b')=F(xx'\xrightarrow{bb'+b\theta_{x'}+\theta_xb'}
yy')=\big(f_0(xx')\xrightarrow{f_1(bb'+b\theta_{x'}+\theta_xb')}
f_0(yy')\big),\]
\[\begin{aligned}F(b)\otimes F(b')&=\big(f_0(x)\stackrel{f_1(b)}{\longrightarrow}f_0(y)\big)\otimes
\big(f_0(x')\stackrel{f_1(b')}{\longrightarrow}f_0(y')\big)\\
\;&=\big(f_0(x)f_0(x')
\xrightarrow{f_1(b)f_1(b')+f_1(b)\theta'_{f_0(x')}+\theta'_{f_0(x)}f_1(b')}
f_0(y)f_0(y')\big).\end{aligned}\]
 \indent By
\eqref{eq15}, $f_1(\theta_xb')=\theta'_{f_0(x)}f_1(b')$ and
$f_1(b\theta_{x'})=f_1(b)\theta'_{f_0(x')}$, hence
\begin{equation}F(b\otimes b')=F(b)\otimes \label{eq17}
F(b').\end{equation}

By (\ref{eq1a}) and (\ref{eq17}), the commutativity of the diagram
\begin{equation}\begin{diagram}
\node{F(xx')}\arrow{e,t}{\widetilde{F}_{x,x'}}\arrow{s,l}{F(b\otimes
b')}\node{F(x)F(x')}\arrow{s,r}{F(b)\otimes F(b')}\\
\node{F(yy')}\arrow{e,b}{\widetilde{F}_{y,y'}}\node{F(y)F(y')}
\end{diagram}\label{bdfn}\end{equation}
follows from $\widetilde{F}_{x,x'}=\widetilde{F}_{y,y'}.$ The
equalities (\ref{ttfn}) and (\ref{ttfa}) come from the
 compatibility of  $(F,\widetilde{F})$ with the
associativity constraint and the distributivity ones, respectively.
\end{proof}
An Ann-functor $F$ is {\it single} if $F(0)=0', F(1)=1'$ and
$\breve{F},\widetilde{F}$ are constants. Then we state the converse
of Lemma \ref{t1}.

\begin{lem}\label{n1}
Let   $(F,\breve{F},\widetilde{F}):\mathcal A_{B\ri D}\ri\mathcal
A_{B'\ri D'}$ be a single Ann-functor. Then, there is a morphism of
E-systems $(f_1,f_0):(B\ri D)\rightarrow (B'\ri D')$, where
$$f_1(b)=F(b),\;\ f_0(x)=F(x),$$
for $b\in B, x\in D$.
\end{lem}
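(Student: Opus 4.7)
My plan is to show that the only content of $F$ being \emph{single} is that $F$ restricts to ring homomorphisms $f_0$ and $f_1$ satisfying the E-system compatibilities, by reading everything off from the structure of the associated Ann-category together with the naturality squares for $\breve F$ and $\widetilde F$.

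First I will verify that $f_0\colon D\to D'$ is a unital ring homomorphism. The constants $\breve F_{x,y}=c$ and $\widetilde F_{x,y}=\beta$ in $B'$ are, by hypothesis, morphisms $F(x+y)\to F(x)+F(y)$ and $F(xy)\to F(x)F(y)$ in $\mathcal{A}_{B'\to D'}$, which translates into $F(x)+F(y)=d'(c)+F(x+y)$ and $F(x)F(y)=d'(\beta)+F(xy)$. Specialising to $x=y=0$ and invoking $F(0)=0'$ forces $d'(c)=d'(\beta)=0'$, so $F$ is additive and multiplicative on objects; together with $F(1)=1'$ this makes $f_0$ a unital ring homomorphism.

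Next, for $f_1\colon B\to B'$, I will use that each $b\in B$ is the morphism $0\xrightarrow{b}d(b)$, so that the naturality square \eqref{bdfa} applied to such morphisms together with the identification $b\oplus b'=b+b'$ yields $f_1(b+b')=f_1(b)+f_1(b')$ once the two copies of the constant $\breve F$ cancel by abelianness of composition in $\mathcal{A}_{B'\to D'}$. A direct unfolding of the $\otimes$-formula on morphisms with source $0$ (where the two mixed terms vanish because $\theta_0=0$) shows $b\otimes b'=bb'$ in $B$, so the analogous square \eqref{bdfn} likewise yields $f_1(bb')=f_1(b)f_1(b')$. The compatibility $f_0d=d'f_1$ is then automatic: $F$ sends $0\xrightarrow{b}d(b)$ to $0'\xrightarrow{f_1(b)}f_0(d(b))$, which by the definition of the associated category says exactly $d'(f_1(b))=f_0(d(b))$.

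The remaining and most delicate step is to verify the operator-homomorphism conditions. The key computation, which I will carry out once and use twice, is that $id_x\otimes b$ --- with $id_x$ the identity on the object $x$ (i.e.\ $0\in B$ viewed as the morphism $x\to x$) and $b\colon 0\to d(b)$ --- collapses under the $\otimes$-formula on morphisms to the morphism $0\to x\,d(b)$ given by the ring element $\theta_xb\in B$. Feeding this into \eqref{bdfn} and cancelling the constant $\widetilde F$ gives $F(\theta_xb)=id_{F(x)}\otimes F(b)$, and a second application of the $\otimes$-formula identifies the right-hand side with $\theta'_{f_0(x)}f_1(b)\in B'$; the symmetric computation on the right yields $f_1(b\theta_x)=f_1(b)\theta'_{f_0(x)}$. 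The main obstacle throughout is notational bookkeeping --- one must hold each $b\in B$ simultaneously as a ring element and as a morphism with source $0$ --- but once this dictionary is set, every E-system-morphism axiom drops out of one of the two naturality squares together with the degeneration of the $\otimes$-formula when zeros or identities are substituted.
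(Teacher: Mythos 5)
Your proof is correct and follows essentially the same route as the paper's: constancy of $\breve F,\widetilde F$ together with $F(0)=0'$ places them in $\mathrm{Ker}\,d'$, the naturality squares \eqref{bdfa} and \eqref{bdfn} then give $F(b\oplus b')=F(b)\oplus F(b')$ and $F(b\otimes b')=F(b)\otimes F(b')$, and the E-system axioms drop out of the $\otimes$-formula. The only cosmetic difference is that the paper first records the general identity $f_1(bb')+f_1(b\theta_{x'})+f_1(\theta_x b')=f_1(b)f_1(b')+f_1(b)\theta'_{f_0(x')}+\theta'_{f_0(x)}f_1(b')$ and then sets $b=0$ and $b'=0$, whereas you substitute $id_x\otimes b$ and source-$0$ morphisms directly --- the same computation in a different order.
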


\begin{proof}

Since $F(0)=0', F(1)=1'$
 and  $\breve{F},\widetilde{F}$ are constants, it is easy to see that $\breve{F},\widetilde{F}$ are in Ker$d'$.
 By the determination of a morphism in $\mathcal A_{B'\ri
D'}$,
$$F(x+y)=F(x)+F(y),\;\;F(xy)=F(x)F(y), $$
so $f_0$ is a ring homomorphism.

Since $\breve{F}$ is a constant in Ker$d'$, the commutative diagram
(\ref{bdfa}) implies
$$F(b\oplus b')=F(b)\oplus F(b').$$
This means that $f_1(b+b')=f_1(b)+f_1(b')$.

Since $\widetilde{F}$ is a constant in Ker$d'$, the commutative
diagram (\ref{bdfn}) implies
$$F(b\otimes b')=F(b)\otimes F(b').$$
By the definition of $\otimes$,
\begin{equation}\label{ht22}f_1(bb')+f_1(b\theta_{x'})+f_1(\theta_xb')=f_1(b)f_1(b')+
f_1(b)\theta'_{f_0(x')}+\theta'_{f_0(x)}f_1(b').\end{equation} In
this relation, taking $b=0$ and then $ b'=0$ yield
$$f_1(\theta_x b')=\theta'_{f_0(x)}f_1(b'),\; f_1(b\theta _{x'}) = f_1(b)\theta '_{f_0(x')}.$$
Thus, \eqref{eq15}  holds. Then, the equation (\ref{ht22}) turns
into $f_1(bb')=f_1(b)f_1(b')$, that is,   $f_1$ is a ring
homomorphism. The rule (\ref{gr1}) also holds. Indeed, for all
morphisms $(x\stackrel{b}{\rightarrow}y)$ in $\mathcal A_{B\ri D},$
$y=d(b)+x.$ It follows that

\[
f_0(y)=f_0(d(b)+x)=f_0(d(b))+f_0(x).\] Besides,
$(f_0(x)\stackrel{f_1(b)}{\rightarrow}f_0(y))$ is a morphism in
$\mathcal A_{B'\ri D'}$, so
\[f_0(y)=d'(f_1(b))+f_0(x).\]
Thus, $f_0(d(b))=d'(f_1(b))$ for all $b\in B.$
\end{proof}

\begin{lem}\label{dl}
Two Ann-functors
$(F,\breve{F},\widetilde{F}),(G,\breve{G},\widetilde{G}):\mathcal
A_{B\ri D}\ri\mathcal A_{B'\ri D'}$  of the same form are homotopic.

\end{lem}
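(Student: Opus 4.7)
Since $F$ and $G$ share the form $(f_1,f_0)$, they agree on objects and morphisms, $Fx=Gx=f_0(x)$ and $F(b)=G(b)=f_1(b)$, so any candidate homotopy $\theta:F\ri G$ amounts to a family $\{\theta_x\}_{x\in D}$ of endomorphisms $\theta_x:f_0(x)\ri f_0(x)$ in $\mathcal{A}_{B'\ri D'}$, equivalently a family of elements $\theta_x\in\Ker d'$. I plan to take $\theta_x:=\alpha$ constant, where $\alpha:=\breve{G}-\breve{F}\in\Ker d'$.

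Naturality will be immediate: composition in $\mathcal{A}_{B'\ri D'}$ is addition in the abelian group $B'$, and because $F$ and $G$ have the same underlying functor, each naturality square collapses to the trivial identity $\alpha+f_1(b)=f_1(b)+\alpha$. For the $\oplus$-morphism condition the required square reduces, after translating composition into addition in $B'$, to $\breve{F}+\alpha+\alpha=\alpha+\breve{G}$, which holds by the definition of $\alpha$.

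The $\otimes$-morphism condition requires computing
\[ \theta_x\otimes\theta_y = \alpha\alpha + \alpha\theta'_{f_0(y)} + \theta'_{f_0(x)}\alpha. \]
The first summand vanishes since $\Ker d'\subset C_{B'}$ by Proposition \ref{md2}(i). To handle the two bimultiplication terms I will invoke condition \eqref{ttfa} of Lemma \ref{t1}: specializing at $x=0$ and using that $\theta'$ is a ring homomorphism, so $\theta'_0=0$, yields $\widetilde{F}=-\breve{F}$; feeding this identity back into \eqref{ttfa} then forces $\theta'_{f_0(x)}(\breve{F})=(\breve{F})\theta'_{f_0(y)}=0$ for all $x,y$, and the same identities hold with $G$ in place of $F$. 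By linearity these annihilations pass to $\alpha$, so $\theta_x\otimes\theta_y=0$. The $\otimes$-compatibility then reduces to $\widetilde{F}=\alpha+\widetilde{G}$, i.e.\ $-\breve{F}=(\breve{G}-\breve{F})+(-\breve{G})$, which is automatic.

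The main technical point will be the careful extraction of both sides of \eqref{ttfa} at $x=0$ to obtain $\widetilde{F}=-\breve{F}$ and the two-sided annihilation of $\breve{F}$ under $\theta'(f_0(D))$; once these are in hand, the cross-terms in $\theta_x\otimes\theta_y$ drop out and the $\oplus$- and $\otimes$-morphism axioms both collapse to the defining relation for $\alpha$. The bicentrality $\Ker d'\subset C_{B'}$, the ring-homomorphism property $\theta'_0=0$, and the conditions \eqref{ttfn}, \eqref{ttfa} thus conspire to make the constant choice $\theta_x=\alpha$ a well-defined Ann-morphism $F\Rightarrow G$.
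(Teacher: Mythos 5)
Your proof is correct and follows essentially the same route as the paper: both take the constant homotopy $\alpha=\breve{G}-\breve{F}\in\Ker d'$ and reduce everything to the conditions \eqref{ttfa}, \eqref{ttfn} of Lemma \ref{t1} together with $\Ker d'\subset C_{B'}$. The only (cosmetic) difference is that you first extract $\widetilde{F}=-\breve{F}$ and $\theta'_{Fx}(\breve{F})=(\breve{F})\theta'_{Fy}=0$ so that $\alpha\otimes\alpha=0$, whereas the paper keeps the computation in the form $\widetilde{G}-\widetilde{F}=\theta'_{Fx}(\alpha)-\alpha=\alpha\otimes\alpha-\alpha$; the two calculations are equivalent.
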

\begin{proof} Suppose that $F$ and $G$ are two Ann-functors of form $(f_1,f_0)$.
By Lemma \ref{t1}, $\breve{F}, \breve{G}$ are constants. We prove
that $\alpha=\breve{G}-\breve{F}$ is a homotopy between $F$ and $G$.
It is easy to check the naturality of $\alpha$ and the compatibility
of
 $\alpha$ with the addition. Besides, $\alpha$ is compatible with
 the multiplication. In other words, the following diagram
\begin{equation}\label{dlfg}\begin{diagram}
\node{F(xy)}\arrow{e,t}{\widetilde{F}}\arrow{s,l}{\alpha}\node{F(x)F(y)}\arrow{s,r}{\alpha\otimes \alpha}\\
\node{G(xy)}\arrow{e,b}{\widetilde{G}}\node{G(x)G(y)}
\end{diagram}\end{equation}commutes.
 Indeed, by Lemma
 \ref{t1},
\[\begin{aligned}\widetilde{G}-\widetilde{F}=&(\theta'_{Fx}(\breve{G})-\breve{G})
-(\theta'_{Fx}(\breve{F})-\breve{F})\\
=&\theta'_{Fx}(\alpha)-\alpha.\end{aligned}\]

Since $\alpha\in \mathrm{Ker}d'\subset C_{B'}$, so
\[\begin{aligned}\alpha\otimes
\alpha=&\alpha.\alpha+(\alpha)\theta'_{Gy}+\theta'_{Gx}(\alpha)\\
=&(\alpha)\theta'_{Gy}+\theta'_{Gx}(\alpha).\end{aligned}\] For
$y=0$, or $x=0$ we have
$$\alpha\otimes
\alpha=(\alpha)\theta'_{Gy}=\theta'_{Gx}(\alpha).$$ Thus,
$$\widetilde{G}-\widetilde{F}=\alpha\otimes
\alpha-\alpha,$$ that is, (\ref{dlfg}) holds.
\end{proof}

Two Ann-functors $(F,\breve{F}, \widetilde{F})$ and $ (G,\breve{G},
\widetilde{G})$ are {\it strong homotopic} if they are homotopic and
$F=G$. By Lemma (\ref{dl}), one obtains the following fact.

\begin{hq}\label{hq10} Two Ann-functors  $F,G:\mathcal A_{B\ri D}\ri \mathcal A_{B'\ri D'}$
are strong homotopic if and only if they are of the same form.
\end{hq}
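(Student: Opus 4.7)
The plan is to prove the corollary as a direct consequence of Lemmas \ref{t1} and \ref{dl}. Unpacking the definition, ``strong homotopic'' means homotopic together with the equality $F = G$ of the underlying functors (i.e., equality on objects and on morphisms), while ``of the same form'' refers only to the pair $(f_1, f_0)$ of maps on objects/morphisms extracted in Lemma \ref{t1}, independent of the choice of the constants $\breve F, \widetilde F \in \mathrm{Ker}\,d'$.

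For the ($\Leftarrow$) direction, I would assume $F$ and $G$ are both Ann-functors of form $(f_1, f_0)$. By Lemma \ref{t1}, this means $F(x) = f_0(x) = G(x)$ for every object $x \in D$ and $F(b) = f_1(b) = G(b)$ for every morphism $b \in B$, so $F = G$ as functors. Lemma \ref{dl} then supplies the homotopy $\alpha = \breve G - \breve F$ between them. The combination $F = G$ plus homotopic is precisely strong homotopy.

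For the ($\Rightarrow$) direction, I would assume $F$ and $G$ are strong homotopic. Then $F = G$ as functors, so in particular $F(x) = G(x)$ for all $x \in \mathrm{Ob}(\mathcal A_{B\to D}) = D$ and $F(b) = G(b)$ for all morphisms $b$. Setting $f_0(x) := F(x) = G(x)$ and $f_1(b) := F(b) = G(b)$ gives data that extracts, via Lemma \ref{n1} applied to either $F$ or $G$, the same morphism of E-systems $(f_1, f_0) : (B \to D) \to (B' \to D')$. Hence both $F$ and $G$ are of form $(f_1, f_0)$.

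No step is a serious obstacle here: the entire content is bookkeeping, because Lemmas \ref{t1}, \ref{n1} and \ref{dl} have already done the structural work (constructing the form from a functor, realizing a form as a functor, and producing the homotopy between two functors of the same form). The only point that requires a moment of care is distinguishing the two levels of data in an Ann-functor -- the underlying functor $F$ versus the coherence constants $\breve F, \widetilde F$ -- and noticing that ``form'' refers only to the former while ``strong homotopy'' forces equality of the former but not of the latter, which is exactly why the equivalence in the corollary holds.
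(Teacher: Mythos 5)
Your proposal is correct and follows essentially the same route as the paper, which simply derives the corollary from Lemma \ref{dl} after observing that the form determines the underlying functor and conversely; your unpacking of the two directions is exactly the intended bookkeeping.
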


We write
 ${\bf Annstr}$
for the   category of strict Ann-categories  and their single
Ann-functors.
 We can define the {\it strong homotopy category}
$Ho\mathbf{Annstr}$  to be the quotient category with the same
objects, but morphisms are strong homotopy classes of single
Ann-functors. We write
 ${\mathrm{Hom}}_{{\bf Annstr}}[\mathcal{A},\mathcal{A}'] $ for the homsets of the homotopy category, that is,
$${\mathrm{Hom}}_{{\bf Annstr}}[\mathcal{A},\mathcal{A}']=\frac{{\mathrm{Hom}}_{{\bf Annstr}}(\mathcal{A},\mathcal{A}')}
{\text{strong homotopies}}.$$ Denote  $\bf{ESyst}$  the category of
E-systems, we obtain the following result which is an extending of
Theorem 1 \cite{Br76}

\begin{thm}[Classification Theorem] \label{pldl} There exists an
equivalence
\[\begin{matrix}
 \Phi:& \bf{ESyst}&\ri&Ho{\bf Annstr}\\
&(B\ri D)&\mapsto&{\mathcal{A}_{B\ri D}}\\
&(f_1,f_0)&\mapsto&[F]
\end{matrix}\]
where $F(x)=f_0(x), F(b)=f_1(b)$, for $x\in \mathrm{Ob}\mathcal A,
b\in \mathrm{Mor}\mathcal A$.
\end{thm}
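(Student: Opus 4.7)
The plan is to establish the equivalence by verifying that $\Phi$ is a well-defined functor which is both fully faithful on hom-sets and essentially surjective on objects. The object assignment $(B\ri D)\mapsto\mathcal A_{B\ri D}$ is the ``associated strict Ann-category'' construction reviewed before Lemma \ref{t1}. For a morphism $(f_1,f_0)$ of E-systems, I take the Ann-functor $F$ produced by Lemma \ref{t1} with the canonical choice $\breve F=\widetilde F=0\in\mathrm{Ker}\, d'$; then the conditions \eqref{ttfn} and \eqref{ttfa} hold trivially and $F$ is single by construction. Functoriality of $\Phi$ (preservation of identities and composition) is immediate from the pointwise formulas $F(x)=f_0(x)$, $F(b)=f_1(b)$.

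Full faithfulness on hom-sets follows directly from Lemma \ref{n1} and Corollary \ref{hq10}. Indeed, Lemma \ref{n1} extracts from each single Ann-functor $\mathcal A_{B\ri D}\ri\mathcal A_{B'\ri D'}$ its form, which is a morphism of E-systems $(B\ri D)\to(B'\ri D')$, so every strong-homotopy class of single Ann-functors has a representative in the image of $\Phi$; Corollary \ref{hq10} then says two such functors are strong homotopic iff they have the same form, so different morphisms of E-systems are sent to different classes. Thus $\Phi$ induces a bijection
\[
\mathrm{Hom}_{\mathbf{ESyst}}\big((B\ri D),(B'\ri D')\big)\ \cong\ \mathrm{Hom}_{\mathbf{Annstr}}\big[\mathcal A_{B\ri D},\mathcal A_{B'\ri D'}\big].
\]

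For essential surjectivity, given any strict Ann-category $\mathcal A$, I form the associated E-system $C_\mathcal A=(B,D,d,\theta)$ as described before Lemma \ref{t1}, and claim that $\mathcal A_{C_\mathcal A}$ is in fact strictly isomorphic to $\mathcal A$. On objects both equal $D=\mathrm{Ob}(\mathcal A)$. For morphisms, I use that $(\mathcal A,\ts)$ is a strict Picard category, so every $x$ has a strict inverse $-x$; the assignments
\[
(x\xrightarrow{f}y)\ \longmapsto\ id_{-x}\ts f\colon 0\ri -x+y,\qquad (0\xrightarrow{b}z)\ \longmapsto\ b\ts id_x\colon x\ri z+x
\]
set up mutually inverse bijections between $\mathrm{Hom}_{\mathcal A}(x,y)$ and $\{b\in B:y=d(b)+x\}=\mathrm{Hom}_{\mathcal A_{C_\mathcal A}}(x,y)$.

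The main obstacle lies in this last step: one has to verify that this bijection respects composition as well as both operations $\ts$ and $\tx$ on morphisms. For $\ts$ this follows from the middle-four interchange for the bifunctor $\ts$ on $\mathcal A$; for $\tx$ it requires unwinding the definition $b\tx b'=bb'+b\theta_{x'}+\theta_xb'$ in $\mathcal A_{C_\mathcal A}$ against the distributivity constraints $\Lh,\Rh$ of $\mathcal A$, which are identities in the strict case. Once this routine calculation is carried out, we obtain the strict Ann-isomorphism $\mathcal A_{C_\mathcal A}\cong\mathcal A$, giving essential surjectivity and completing the proof that $\Phi$ is an equivalence.
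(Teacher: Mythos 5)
Your proof follows essentially the same route as the paper's: fullness and faithfulness on hom-sets via Lemma \ref{n1} together with Corollary \ref{hq10}, and essential surjectivity via the associated E-system $C_{\mathcal A}$ of a strict Ann-category. The only divergence is in the last step, where the paper asserts the equality $\Phi(C_{\mathcal A})=\mathcal A$ on the nose, while you (more carefully, since a morphism $x\ri y$ of $\mathcal A_{C_{\mathcal A}}$ is by definition a morphism of $\mathcal A$ with source $0$, not a morphism $x\ri y$ of $\mathcal A$) exhibit the translation isomorphism $f\mapsto id_{-x}\ts f$ explicitly; both versions leave the same routine compatibility checks to the reader.
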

\begin{proof}
By Corollary  \ref{hq10}, the correspondence  $\Phi$  on homsets,
$$\mathrm{Hom}_{\mathbf{ESyst}}(B\ri D,B'\ri D')\rightarrow \mathrm{Hom}_{{\bf Annstr}}[\mathcal{A}_{B\ri D},
\mathcal{A}_{B'\ri D'}],$$ is an injection.  By Lemma \ref{n1},
every single Ann-functor  $F:\mathcal{A}_{B\ri D}\ri
\mathcal{A}_{B'\ri D'}$ determines a morphism of E-systems
$(f_1,f_0)$, and clearly $\Phi(f_1,f_0)=[F]$, thus $\Phi$ is
surjective on homsets.

Let $C_\mathcal A$ be an E-system associated to a strict
Ann-category $\mathcal A$. By the construction of an Ann-category
associated to an E-system, $\Phi(C_\mathcal A)=\mathcal A$ (rather
than an isomorphism).   Hence, $\Phi$ is an equivalence of
categories.
\end{proof}

%%%%%%%%%%%%%%%%
%\newpage

\section{Ring extensions  of the type of an E-system}
In this section we consider the ring extensions  of the type of an
E-system, which are analogous to the group extensions of the type of
a crossed module  \cite{Br94}.

\noindent\textbf{Definition 5.} Let $(B,D,d,\theta)$ be an E-system.
A ring \emph{extension }of  $B$ by $Q$ of \emph{type } $B\ri D$ is a
diagram of ring homomorphisms\begin{align*}  \begin{diagram}
\xymatrix{ 0 \ar[r]& B \ar[r]^j \ar@{=}[d] &E  \ar[r]^p \ar[d]^\varepsilon & Q \ar[r]& 0, \\
& B \ar[r]^d & D}
\end{diagram}
\end{align*}
 where the top row is exact, the quadruple $(B,E,j,\theta')$ is an
E-system where $\theta'$ is given by the bimultiplication type, and
the pair $(id,\varepsilon)$ is a morphism of E-systems.

Two extensions of $B$ by $Q$ of  type $B\xrightarrow{d}D$ are said
to be \emph{equivalent } if there is a morphism of exact sequences
\begin{align}\begin{diagram}\label{mrtd}
\xymatrix{ 0 \ar[r]& B \ar[r]^j \ar@{=}[d] &E  \ar[r]^p \ar[d]^\eta
& Q \ar[r]\ar@{=}[d] & 0,&\;\;\;E
\ar[r]^\varepsilon&D \\
 0 \ar[r]& B \ar[r]^{j'}  &E'  \ar[r]^{p'}   & Q \ar[r]& 0,&\;\;\;E'
\ar[r]^{\varepsilon'}&D}\end{diagram}
\end{align}
and $\varepsilon'\eta=\varepsilon$.
 Obviously, $\eta $ is an isomorphism.

In the diagram
\begin{align}  \begin{diagram}
\xymatrix{\mathcal E:\;\;\; 0 \ar[r]& B \ar[r]^j \ar@{=}[d] &E  \ar[r]^p \ar[d]^\varepsilon & Q \ar[r]\ar@{.>}[d]^\psi & 0, \\
 & B \ar[r]^d  &D  \ar[r]^q   & \text{Coker}d }
\end{diagram}\label{eq10}
\end{align}
where $q$ is a canonical projection, since the top row is exact and
$ q\circ \varepsilon \circ j= q\circ d =0,$ there  is a ring
homomorphism $\psi: Q\rightarrow \text{Coker}d $ such that the right
hand side square commutes. Moreover, $\psi$ depends only on the
equivalence class of the extension $\mathcal E$. Our purpose  is to
study the set
 $$\mathrm{Ext}_{B\ri D}(Q,B,\psi)$$
of equivalence classes of extensions of $B$ by $Q$ of type $B\ri D$
inducing $\psi$. The results use the obstruction theory of
Ann-functors

Let $\mathcal A= \mathcal A_{B\rightarrow D}$ be the Ann-category
associated to an E-system $B\rightarrow D$. Clearly, $\pi_0\mathcal
A=\mathrm{Coker}d$,  $\pi_1\mathcal A=\mathrm{Ker}d$ and therefore
the reduced Ann-category $S_{\mathcal A}$ is of  form%Khi ®ã, Ann-ph¹m trï thu gän $S_{\mathcal A}$ lµ chÆt chÏ.
$$S_\mathcal A= (\mathrm{Coker} d, \mathrm{Ker} d,k),$$
where $\overline{k}\in H^3_{Shu} (\mathrm{Coker} d, \mathrm{Ker} d)$
since $\mathcal A$ and $S_\mathcal A$ are regular Ann-categories.
The homomorphism $\psi:Q\rightarrow \mathrm{Coker} d$ induces an
 {\it obstruction},
\begin{equation}\psi^\ast k\in Z^3_{Shu} (Q, \mathrm{Ker} d),\label{cct}
\end{equation}
which plays a  fundamental role to state  Theorem  \ref{dlc}. This
is the main result of this section, an extending of Theorem 5.2
\cite {Br94}. Besides, a particular case of a regular E-system  when
$Q=\mathrm{Coker} d$ and $\psi=id_{\mathrm{Coker}d}$ is a {\it
$\partial$-extension}  \cite{P2004}, so our result contains Theorem
4.4.2 \cite {P2004}.
\begin{thm}\label{dlc} Let $(B,D,d,\theta)$ be a regular E-system,
$\psi: Q\rightarrow \mathrm{Coker} d$ be a ring homomorphism. Then,
the vanishing of $\overline{\psi^\ast k}$   in $H^3_{Shu} (Q,
\mathrm{Ker} d)$ is necessary and sufficient for there to exist a
ring extension of $B$ by $Q$ of type $B\rightarrow D$ inducing
$\psi$. Further, if $\overline{\psi^\ast k}$ vanishes then there is
a bijection
$$\mathrm{Ext}_{B\ri D}(Q,B,\psi)\leftrightarrow
H^2_{Shu}(Q,\mathrm{Ker}d).$$
\end{thm}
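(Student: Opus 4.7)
The plan is to reduce the theorem to the obstruction theory of Ann-functors (Theorem \ref{s63}) via the equivalence between E-systems and strict Ann-categories established in Theorem \ref{pldl}. The key idea is to translate a ring extension $\mathcal{E}$ of $B$ by $Q$ of type $B\to D$ into an Ann-functor from a discrete Ann-category on $Q$ into the reduced Ann-category $S_{\A}$, whose obstruction turns out to be precisely $\psi^{\ast}k$.

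First, given an extension $\mathcal{E}$, the quadruple $(B,E,j,\theta')$ is an E-system and $(id,\varepsilon)$ is a morphism of E-systems into $(B,D,d,\theta)$. By Theorem \ref{pldl} this yields a strict Ann-category $\A_{B\to E}$ together with a single Ann-functor $F_\mathcal{E}:\A_{B\to E}\to\A$. Since $j$ is injective, $\pi_0\A_{B\to E}=Q$ and $\pi_1\A_{B\to E}=0$, so the reduced Ann-category is the ``discrete'' one $(Q,0,0)$. Composing with the canonical Ann-equivalences of Proposition \ref{md22}, one obtains an Ann-functor
$$\Phi_{\mathcal{E}}:(Q,0,0)\longrightarrow S_{\A}=(\Coker d,\Ker d,k)$$
of type $(\psi,0)$. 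Conversely, given an Ann-functor $\Phi$ of type $(\psi,0)$, its associated pair $g_\Phi=(\tau,\nu):Q\times Q\to\Ker d\subset B$ can be used, together with a chosen set-theoretic lift $Q\to D$ of $\psi$, to build a ring $E$ as a crossed product of $B$ and $Q$ whose addition and multiplication are twisted by $\tau$ and $\nu$; the ring axioms follow from the cocycle relations on $(\tau,\nu)$ imposed by Ann-functoriality of $\Phi$. This produces an extension inducing $\psi$, inverse to the first assignment up to equivalence.

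Next, one verifies that the correspondence descends to equivalence/homotopy classes: two extensions $\mathcal{E}, \mathcal{E}'$ are equivalent in the sense of diagram $(\ref{mrtd})$ iff the associated pairs $(\tau,\nu)$ and $(\tau',\nu')$ differ by $\partial t$ for some $t:Q\to\Ker d$, which is precisely the condition for $\Phi_{\mathcal{E}}$ and $\Phi_{\mathcal{E}'}$ to be Ann-homotopic. Theorem \ref{s63}, in its regular-Ann-category version (where Mac Lane cohomology is replaced by Shukla cohomology, as noted before Proposition \ref{md22}), then gives: the obstruction of an Ann-functor of type $(\psi,0)$ is $0_{\ast}k-\psi^{\ast}k=-\psi^{\ast}k$ in $Z^3_{Shu}(Q,\Ker d)$, whose vanishing class in $H^3_{Shu}(Q,\Ker d)$ is equivalent to existence; and when it vanishes, the set of homotopy classes is in bijection with $H^2_{Shu}(Q,\Ker d)$, yielding the claimed bijection.

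The main obstacle is the inverse construction above, namely recovering an actual extension $\mathcal{E}$ from the abstract datum of an Ann-functor $\Phi:(Q,0,0)\to S_{\A}$. This requires exhibiting an explicit factor set on $Q$ with values in $B$ (obtained from $(\tau,\nu)$ and the chosen lift of $\psi$) and verifying that the resulting ring $E$ is associative, distributive, and equipped with a ring homomorphism $\varepsilon:E\to D$ for which $(id,\varepsilon)$ is a morphism of E-systems. One must also show that different choices of lift of $\psi$ produce equivalent extensions, so that the correspondence is well-defined on equivalence classes; this is the counterpart, in the ring-with-E-system setting, of the factor-set construction used by Brown–Mucuk for crossed modules over groups.
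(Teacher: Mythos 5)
Your proposal follows essentially the same route as the paper: it too reduces everything to the obstruction theory of Ann-functors (Theorem \ref{s63}), constructs from each Ann-functor $\mathrm{Dis}\,Q\to\mathcal A$ a crossed-product extension $E_0=[B,\varphi,f,g,Q]$ (Lemma \ref{mrlk}), and establishes a Schreier-type bijection between homotopy classes of such functors and equivalence classes of extensions (Theorem \ref{schr}). The one point to flag in your sketch is that associativity of the crossed-product multiplication does \emph{not} follow from the cocycle relations on $(\tau,\nu)$ alone---it is precisely here that the regularity hypothesis (permutability of the bimultiplications $\theta_x$, relation (\ref{eq133})) enters, as the paper's proof of Lemma \ref{mrlk} makes explicit.
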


 The first assertion is based on the
following lemmas.
\begin{lem}\label{mrlk}
For every Ann-functor $(F,\breve{F}, \widetilde{F}):\mathrm{Dis}
Q\ri \mathcal A$ there exists an extension $\mathcal E_F$ of $B$ by
$Q$ of type $B\ri D$ inducing $\psi: Q\ri \mathrm{Coker} d$.
\end{lem}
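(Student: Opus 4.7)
The plan is to realise $\mathcal{E}_F$ as a semidirect-type construction on the set $E=B\times Q$, with twisting data read off from the components of the Ann-functor $F$. On objects one has a function $F_0:=F|_{\mathrm{Ob}}:Q\to D$ (the standing hypothesis, needed for the extension to induce $\psi$, being that $F_0$ covers $\psi$ modulo $\mathrm{Im}\,d$); since $\mathrm{Dis}\,Q$ has only identity arrows there is no further information on morphisms. The natural isomorphisms
\[\breve{F}_{q,q'}:F(q+q')\to F(q)+F(q'),\qquad \widetilde{F}_{q,q'}:F(qq')\to F(q)F(q')\]
are morphisms of $\mathcal{A}_{B\to D}$, hence correspond to elements $\sigma(q,q'),\tau(q,q')\in B$ with
\[d(\sigma(q,q'))=F_0(q)+F_0(q')-F_0(q+q'),\qquad d(\tau(q,q'))=F_0(q)F_0(q')-F_0(qq'),\]
and the unit isomorphisms of $F$ allow us to normalise $\sigma,\tau$ to vanish whenever an argument is $0$ or $1$ as appropriate.

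Next, I would endow $E=B\times Q$ with operations mirroring $\oplus,\otimes$ on $\mathcal{A}_{B\to D}$:
\[(b,q)+(b',q')=\bigl(b+b'+\sigma(q,q'),\ q+q'\bigr),\]
\[(b,q)\cdot(b',q')=\bigl(bb'+b\,\theta_{F_0(q')}+\theta_{F_0(q)}\,b'+\tau(q,q'),\ qq'\bigr).\]
The ring axioms for $E$ then unpack to precisely the Ann-functor coherence conditions on $F$: additive associativity, commutativity and unitality come from $(F,\breve{F})$ being a symmetric $\oplus$-functor; multiplicative associativity and unitality from $(F,\widetilde{F})$ being a monoidal $\otimes$-functor; and the two distributivity laws in $E$ from the compatibility of $F$ with $\mathfrak{L}$ and $\mathfrak{R}$.

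Finally, I would set $j(b)=(b,0)$, $p(b,q)=q$ and $\varepsilon(b,q)=d(b)+F_0(q)$, then verify that $0\to B\xrightarrow{j}E\xrightarrow{p}Q\to 0$ is exact, that $\varepsilon$ is a ring homomorphism (this is the content of the displayed equations for $d\sigma,d\tau$ together with the E-system identities $d(\theta_xb)=x\,d(b)$ and $d(b\theta_x)=d(b)\,x$), and that $j(B)$ is a two-sided ideal in $E$, so that $(B,E,j,\theta')$ is an E-system of bimultiplication type as in Example~3. The pair $(\mathrm{id},\varepsilon)$ is then a morphism of E-systems by construction, and the induced map on cokernels sends $q\mapsto[F_0(q)]=\psi(q)$, so $\mathcal{E}_F$ induces $\psi$ as required.

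The main obstacle is the second step: although each ring axiom for $E$ corresponds to exactly one Ann-functor coherence identity for $F$, matching them cleanly — especially the left- and right-distributivity laws, which come from the compatibility of $(F,\widetilde{F})$ with $\mathfrak{L}$ and $\mathfrak{R}$ — requires careful bookkeeping of the cocycle-like identities satisfied by $\sigma$, $\tau$ and of the induced action $\theta\circ F_0$ of $Q$ on $B$.
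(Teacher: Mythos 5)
Your construction is essentially the paper's. The paper likewise builds the crossed product $E_0=[B,\varphi,f,g,Q]$ on the set $B\times Q$, with $f(u,v)=\breve F_{u,v}$ and $g(u,v)=\widetilde F_{u,v}$ read off as elements of $B$, $\varphi(u)=\theta_{Fu}$, the same two operations you write down, and $j_0(b)=(b,0)$, $p_0(b,u)=u$, $\varepsilon(b,u)=d(b)+x_{\psi(u)}$; the only cosmetic difference is that the paper first replaces $F$ by the homotopic composite $H\circ K$ through the reduced Ann-category $S_{\mathcal A}$, so that $F(u)=x_{\psi(u)}$ is a fixed set-theoretic section of $\psi$ and $f,g$ are automatically ``normal'', whereas you normalise via the unit isomorphisms of $F$ directly.

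One point you gloss over deserves emphasis, because the paper singles it out as the crux of the verification: the associativity of the multiplication on $E$ is \emph{not} purely an Ann-functor coherence condition. Expanding the two triple products produces the term $[\varphi(u)b']\varphi(u'')$ on one side and $\varphi(u)[b'\varphi(u'')]$ on the other, and their equality is exactly the permutability of the bimultiplications in $\theta(D)$, i.e.\ the regularity of the E-system $(B,D,d,\theta)$ (relation (\ref{eq133})); the paper even remarks that associativity in $E_0$ holds if and only if the E-system is regular. Since Theorem \ref{dlc} assumes regularity this is available, but your claim that each ring axiom ``unpacks to precisely'' a coherence identity for $(F,\breve F,\widetilde F)$ should be amended to include this extra input from the E-system structure. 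Everything else --- the cocycle identities for $\sigma,\tau$, the relations $\mu_{\sigma(u,v)}=\varphi(u)+\varphi(v)-\varphi(u+v)$ and $\mu_{\tau(u,v)}=\varphi(u)\varphi(v)-\varphi(uv)$ obtained from $\theta\circ d=\mu$, the verification that $(\mathrm{id}_B,\varepsilon)$ is a morphism of E-systems, and the computation $q\varepsilon(0,u)=\psi(u)$ --- matches the paper's argument.
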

Such extension $\mathcal E_F$ is called an {\it associated }
extension to Ann-functor  $F$.

\begin{proof}
By Proposition \ref{md22}, $(F,\breve{F}, \widetilde{F})$ induces an
Ann-functor $K:\mathrm{Dis} Q\ri S_\mathcal A$ of type $(\psi,0)$.
Let $(H,\breve{H}, \widetilde{H}):S_\mathcal A\ri \mathcal A$ be a
canonical Ann-functor defined by the stick $(x_s, i_x)$. By \eqref
{eq2}, we have
$$ H(s)= x_s,\; H(s,b)=b,\;\breve{H}_{s,r}=-i_{x_s+ x_r},\; \widetilde{H}_{s,r}=-i_{x_s\cdot
x_r}.$$ Also by Proposition \ref{md22}, $(F,\breve{F},
\widetilde{F})$ is homotopic to  the composition
$$ \text{Dis} Q\xrightarrow{K} S_\mathcal A\xrightarrow{H}\mathcal A. $$
So one can choose $(F,\breve{F}, \widetilde{F})$ being  this
composition. By the determination of $\breve{HK}$ and
$\widetilde{HK}$,

\begin{equation}\label{eq24}\breve{F}_{u,v}= f(u,v)= f'(u,v)-i_{x_s+x_r},\end{equation}
\begin{equation}\label{eq25} \widetilde{F}_{u,v} = g(u,v)= g'(u,v) -i_{x_s\cdot  x_r}\in B, \end{equation}
where $ u,v\in Q, s=\psi(u), r= \psi(v), f'(u,v)=\breve{K}_{u,v},
g'(u,v)=\widetilde{K}_{u,v}.$ By the compatibility of $(F,\breve{F}
,\widetilde{F})$ with the strict constraints of Dis$Q$ and $\mathcal
A$, the functions $f$ and $g$ are the ``normal" ones satisfying

\begin{equation} f(u,v+t)+f(v,t)-f(u,v)-f(u+v,t)=0,\label{eq255}
\end{equation}
\begin{equation}
f(u,v)=f(v,u),
\end{equation}
\begin{equation}
\theta _{Fu}g(v,t)-g(uv,t)+g(u,vt)-g(u,v)\theta _{Ft}=0,
\label{eq27}
\end{equation}
\begin{equation}
g(u,v+t)-g(u,v)-g(u,t)+\theta _{Fu}f(v,t)-f(uv,ut)=0,
\end{equation}
\begin{equation}
g(u+v,t)-g(u,t)-g(v,t)+f(u,v)\theta _{Ft}-f(ut,vt)=0.
\end{equation}
The function $\varphi : Q\rightarrow M_B$ defined by
\begin{equation}\label{vp}\varphi (u)= \theta _{Fu} = \theta _{x_s}\;\;
(s=\psi(u)) \notag\end{equation} satisfies the relations
\begin{equation}
\varphi (u)+\varphi (v)=\mu_{f(u,v)}+ \varphi (u+v), \label{eq31}
\end{equation}
\begin{equation}
\varphi (u)\varphi (v)=\mu_{g(u,v)} + \varphi (uv). \label{eq32}
\end{equation}
We only prove the relation \eqref{eq31}, the proof of \eqref{eq32}
follows from \eqref{eq25} in the same way. Since
$f'(u,v)=\breve{K}_{u,v}\in\mathrm{ Ker}d$, then by Proposition
\ref{md2}, $f'(u,v)\in C_B.$ By \eqref{eq24}, one has
$\mu_{f(u,v)}=\mu (-i_{x_s+x_r})$. Thus,
\begin{align} \varphi (u)+\varphi (v)&= \theta_{x_s}+ \theta_{x_r} = \theta _{x_s+ x_r} \notag{}\\
&= \theta [d(-i_{x_s+x_r})+x_{s+r}]\notag{}
= \theta [d(-i_{x_s+ x_r})]+\theta_{x_{s+r}}\notag{}\\
&= \mu(-i_{x_s+ x_r})+ \varphi (u+v)\notag{}
\stackrel{\eqref{eq24}}{=} \mu_{f(u,v)}+ \varphi (u+v).  \notag
\end{align}

Since the family of functions $(\varphi , f, g)$ satisfies the
relations \eqref{eq255} - \eqref{eq32}, we have a crossed product
%in the sense of \cite{Mac2}
$ E_0=[B,\varphi,f,g,Q],$ that means $ E_0=B\times Q$, and two
operations are
\begin{equation}    (b,u)+(b',u')= (b+ b' + f(u,u'),u+u'), \notag \end{equation}
\begin{equation}  (b,u).(b',u')= (b.b'+b\varphi (u')+\varphi (u)b' + g(u,u'),uu'). \notag
\end{equation}
The set $E_0$ satisfies the axioms of a ring, in which note that the
associativity for the multiplication in  $E_0$ holds if and only if
the E-system $B\ri D$ is regular. Indeed, one can calculate the
triple products as follows:
\begin{align*}
[(b,u)(b',u')](b'',u'') &=((bb')b''+
b\varphi(u')\varphi(u'')+[\varphi(u)b']
\varphi(u'')\\
&+g(u,u')\varphi(u'') +\varphi(uu')b''+g(uu',u''),(uu')u''),
 \end{align*}
\begin{align*} (b,u)[(b',u')(b'',u'')]
&=(b(b'b'')+ b\varphi(u'u'')+\varphi(u)[b'\varphi(u'')]\\
&+\varphi(u)\varphi(u')b'' +\varphi(u)g(u,u')+g(u,u'u''),u(u'u'')),
\end{align*}
By \eqref{eq27}, \eqref{eq32}, associative law for the
multiplication in $B,Q,$ and commutative law for the addition in
$B,$ especially by the relation (\ref{eq133}),
$[\varphi(u)b']\varphi(u'')=\varphi(u)[b'\varphi(u'')],$ we get the
associative law for product in $E_0.$ Then, there is an exact
sequence of ring homomorphisms
$$\mathcal E_F:\;\;0\rightarrow B\stackrel{j_0}{\rightarrow}E_0\stackrel{p_0}{\rightarrow}Q\rightarrow 0,$$
where $j_0(b)=(b,0);\ p_0(b,u)=u,\ b\in B, u\in Q.$ Since $j_0(B)$
is a two-sided ideal in $E_0$, $B\xrightarrow{j_0}E_0$ is an
E-system, where $\theta_0: E_0\rightarrow M_B$ is given by the
bimultiplication type.

We define a ring homomorphism $\varepsilon:E_0\ri D$ by
$$\varepsilon(b,u)=db+x_{\psi(u)},\;(b,u)\in E_0,$$
where $x_{\psi(u)}$ is a representative of $u$ in $D$. We  show that
the pair $(id_B,\varepsilon)$ satisfies the rules \eqref{gr1},
\eqref{eq15}. Clearly, $\varepsilon\circ j_0=d$. Besides, for all
$(b,u)\in E_0,c\in B$,
$$\theta_0(b,u)(c)=j^{-1}_0[(b,u)(c,0)]=bc+\varphi(u)c,$$
$$\theta_{\varepsilon(b,u)}(c)=\theta_{db+x_{\psi(u)}}c=bc+\varphi(u)c.$$
Thus, $\theta_0(b,u)(c)=\theta_{\varepsilon(b,u)}(c)$. Analogously,
$c\theta_0(b,u)=c\theta_{\varepsilon(b,u)}$. So $(id_B,\varepsilon)$
is a morphism of E-systems, that is, one has an extension
\eqref{eq10}, where $E$ is replaced by $E_0$.

For all $u\in Q$ we have $ q\varepsilon(0,u)= q(x_{\psi(u)})=
\psi(u)$, then the extension  $\mathcal E_F$ induces $\psi:
Q\rightarrow \text{Coker} \ d$.
\end{proof}

{\bf The proof of Theorem \ref{dlc}}
\begin{proof}
Let us recall that $\mathcal A$ is the Ann-category associated to
the regular E-system $B\stackrel{d}{\rightarrow}D$. Then, its
reduced Ann-category is $S_{\mathcal A}=(\mathrm{Coker} d,
\mathrm{Ker} d, k)$, where $k\in Z^{3}_{Shu}(\mathrm{Coker}
d,\mathrm{Ker }d)$.  The pair
$$(\psi,0):(Q,0,0)\ri (\mathrm{Coker} d, \mathrm{Ker }d,k)$$
has $-\psi^*k$ as an obstruction. By the assumption,
$\overline{\psi^\ast k}=0$, hence by Proposition \ref{s63} the pair
$(\psi,0)$ determines
an Ann-functor $(\Psi,\breve{\Psi},\widetilde{\Psi}):\mathrm{Dis} Q\ri S_{\mathcal A}$. %, víi $\widetilde{\Psi}=id$.
 Then the composition of $(\Psi,\breve{\Psi}, \widetilde{\Psi})$ and
$(H,\breve{H}, \widetilde{H}): S_{\mathcal A}\ri \mathcal A$ is an
Ann-functor $(F,\breve{F}, \widetilde{F}):\mathrm{Dis} Q\ri \mathcal
A$, and by Lemma \ref{mrlk} we obtain an associated extension
$\mathcal E_F$.

Conversely, suppose that there is an extension as in the diagram
(\ref{eq10}). Let $ \mathcal A'$  be the Ann-category associated to
the E-system $B\ri E$. By Proposition \ref{md22}, there is an
Ann-functor $F:\mathcal A' \ri \mathcal A$. Since the reduced
Ann-category of $\mathcal A'$ is $\mathrm{Dis} Q$, so  by
Proposition \ref{md22}, $F$ induces an Ann-functor of type
$(\psi,0)$ from $\mathrm{Dis} Q$ to $(\mathrm{Coker} d,\mathrm{Ker}
d,k)$.  Now, by Proposition \ref{s63}, the obstruction of the pair
$(\psi,0)$  must vanish in $H^3_{Shu}(Q, \mathrm{Ker} d),$ that is,
$\overline{\psi^\ast k}=0$.
\end{proof}

The final assertion of Theorem \ref{dlc} follows from the next
theorem.

\begin{thm} [Schreier Theory for ring extensions of the type of an E-system]\label{schr}
There is a bijection
$$ \Omega:\mathrm{Hom}^{Ann}_{(\psi,0)}[\mathrm{Dis}Q,\mathcal A] \rightarrow \mathrm{Ext}_{B\ri D}(Q,
B,\psi).
$$
\end{thm}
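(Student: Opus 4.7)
The map $\Omega$ is defined by $[F]\mapsto [\mathcal E_F]$, where $\mathcal E_F$ is the associated extension constructed in Lemma \ref{mrlk}. The plan splits into three verifications: well-definedness, surjectivity, and injectivity.

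First, for well-definedness, suppose $F,G:\mathrm{Dis}Q\to\mathcal A$ are two Ann-functors of type $(\psi,0)$ that are homotopic via $t:Q\to \mathrm{Ker}d\subset B$. The homotopy condition translates into $\breve{G}_{u,v}-\breve{F}_{u,v}=t(u)+t(v)-t(u+v)$ and $\widetilde{G}_{u,v}-\widetilde{F}_{u,v}=\theta_{Fu}t(v)+t(u)\theta_{Fv}-t(uv)$ (the Shukla-$2$-coboundary of $t$). I would then check that the map $\eta:E_0^F\to E_0^G$ defined by $\eta(b,u)=(b+t(u),u)$ is a ring isomorphism that sits inside diagram \eqref{mrtd} and satisfies $\varepsilon'\eta=\varepsilon$; this makes $\mathcal E_F$ and $\mathcal E_G$ equivalent.

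For surjectivity, start with an arbitrary extension $\mathcal E:0\to B\xrightarrow{j}E\xrightarrow{p}Q\to 0$ of type $B\to D$ inducing $\psi$. Choose a set-theoretic section $s:Q\to E$ with $s(0)=0$, and define $f(u,v)=s(u)+s(v)-s(u+v)$ and $g(u,v)=s(u)s(v)-s(uv)$, both in $j(B)\cong B$. I would then construct $F:\mathrm{Dis}Q\to\mathcal A$ on objects by $F(u)=\varepsilon(s(u))$, with $\breve{F}_{u,v}=f(u,v)$ and $\widetilde{F}_{u,v}=g(u,v)$ viewed as morphisms in $\mathcal A$ via the identification $F(u+v)\xrightarrow{f(u,v)}F(u)+F(v)$ (valid since $\varepsilon j=d$). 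The axioms of a ring on $E$, together with the fact that $(id,\varepsilon)$ is a morphism of E-systems, force the families $f,g$ to satisfy exactly equations \eqref{eq255}--\eqref{eq32} with $\varphi(u)=\theta_{Fu}$, so that $F$ is indeed an Ann-functor of type $(\psi,0)$. Finally, the map $E_0^F=[B,\varphi,f,g,Q]\to E$ given by $(b,u)\mapsto j(b)+s(u)$ is a ring isomorphism exhibiting $\mathcal E_F\sim\mathcal E$.

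For injectivity, suppose $\mathcal E_F\sim\mathcal E_G$ via an isomorphism $\eta:E_0^F\to E_0^G$. Write $\eta(0,u)=(t(u),u)$; then $t:Q\to B$ and the compatibility $\varepsilon'\eta=\varepsilon$ forces $dt(u)+F(u)=G(u)$, i.e.\ $t_u:F(u)\to G(u)$ is a morphism in $\mathcal A$, which lands in $\mathrm{Ker}d$ when $F$ and $G$ agree on objects up to isomorphism of the required type. Imposing that $\eta$ preserves addition and multiplication on $E_0^F$ yields precisely the coboundary relations above, which say that $t$ is a homotopy $F\simeq G$. Thus $[F]=[G]$ in $\mathrm{Hom}^{Ann}_{(\psi,0)}[\mathrm{Dis}Q,\mathcal A]$.

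The main obstacle is the surjectivity step: one must verify that the cocycle conditions \eqref{eq255}--\eqref{eq32} on the pair $(f,g)$ extracted from a section $s$ of a given ring extension are exactly the coherence conditions for $F=(F,\breve{F},\widetilde{F})$ to be an Ann-functor of type $(\psi,0)$ between $\mathrm{Dis}Q$ and $\mathcal A$. This amounts to translating the associativity, distributivity, and commutativity of the ring operations on $E$ (together with the regularity \eqref{eq133}) into the axioms (Ann-1)--(Ann-3) read off on $F$; once this dictionary is in place, the rest of the theorem follows routinely from Proposition \ref{s63} combined with the construction of $\mathcal E_F$ in Lemma \ref{mrlk}.
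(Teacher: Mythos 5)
Your proposal is correct and follows essentially the same strategy as the paper: well-definedness and injectivity are both handled by showing that a homotopy $t$ between Ann-functors corresponds exactly to an equivalence $(b,u)\mapsto(b+t(u),u)$ of the crossed-product extensions, and surjectivity by producing an Ann-functor from a given extension and checking $\mathcal E_F\sim\mathcal E$ via $(b,u)\mapsto j(b)+s(u)$. The only cosmetic divergence is in the surjectivity step, where you extract the factor sets $f,g$ directly from a set-theoretic section $s$ and verify the cocycle identities by hand, whereas the paper obtains the same data by composing the canonical equivalence $\mathrm{Dis}Q\to\mathcal A_{B\to E}$ (whose stick $(e_u,i_e)$ is precisely your section, with $-i_{e_u+e_v}=f(u,v)$) with the single Ann-functor induced by $(id_B,\varepsilon)$ via Lemma \ref{t1}, so the coherence conditions come for free.
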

\begin{proof}

{\it Step 1: The Ann-functors $(F,\breve{F} ,\widetilde{F}) $,
$(F',\breve{F'} ,\widetilde{F}') $ are homotopic if and only  if
their corresponding associated extensions $\mathcal E_F, \mathcal
E_{F'}$ are equivalent.}

Let two Ann-functors $F, F':$ Dis$Q\ri\mathcal A$ be homotopic by a
homotopy
  $\alpha :F\ri F'$.
Then, by the definition of an Ann-morphism, the following diagrams
commute
\[\begin{diagram}
\node{F(u+v)}\arrow{e,t}{\breve{F} _{u,v}}\arrow{s,l}{\alpha_{u+
v}}\node{F(u)+F(v)}\arrow{s,r}{\alpha_u+\alpha_v}\\
\node{F'(u+v)}\arrow{e,b}{\breve{F'} _{u,v}}\node{F'(u)+F'(v),}
\end{diagram}\;\;\;\;\;\;\;\;
\begin{diagram}
\node{F(uv)}\arrow{e,t}{\widetilde{F}_{u,v}}\arrow{s,l}{\alpha_{uv}}\node{F(u)F(v)}\arrow{s,r}{\alpha_u\otimes
 \alpha_v}\\
\node{F'(uv)}\arrow{e,b}{\widetilde{F'}_{u,v}}\node{F'(u)F'(v).}
\end{diagram}\]
By the definition of the operation $\tx$ on $\mathcal A$,
$$\alpha_u\tx \alpha_v= \alpha_u\alpha_v+\alpha_u\theta _{Fv}+ \theta _{Fu}\alpha_v.$$
Then, since $f(u,v)=\breve{F}_{u,v}, f'(u,v)=\breve{F'}_{u,v},
g(u,v)=\widetilde{F}_{u,v}, g'(u,v)=\widetilde{F}'_{u,v}$, we have
\begin{equation}    f'(u,v)-f(u,v)=\alpha_u-\alpha_{u+v}+
\alpha_v, \label{eq40} \end{equation}
   \begin{equation}  g'(u,v)-g(u,v)= \alpha_u\alpha_v+\alpha_u
   \theta _{Fv}+ \theta _{Fu}\alpha_v- \alpha_{uv}.\label{eq41}  \end{equation}

Now, we set
\begin{align*} \alpha^*: E_F&\rightarrow E_{F'}\\
(b,u)&\mapsto (b-\alpha_u,u). \end{align*}

Note that $\theta _{F'u}=\mu_{\alpha_u}+\theta _{Fu}$, and by the
relations (\ref{eq40}), \eqref{eq41},  the correspondence $\alpha^*$
is an isomorphism. Besides,  the  diagram \eqref{mrtd} commutes in
which $E$ and $E'$ are replaced by $E_F$ and $E_{F'}$, respectively.

Finally,  $\varepsilon'\alpha^*=\varepsilon$. Indeed, since $\alpha
:F\ri F'$ is a homotopy, then  $Fu=x_{\psi(u)}=F'u$. Thus
$x_{\psi(u)}=d(\alpha _u)+ x_{\psi(u)}$, or $d(\alpha _u)=0$. Hence,
\begin{align*} \varepsilon'\alpha^*(b,u)=\ & \varepsilon'(b-\alpha _u,u)=d(b-\alpha _u)+
x_{\psi(u)}\\
= \ & d(b)-d(\alpha
_u)+x_{\psi(u)}=d(b)+x_{\psi(u)}=\varepsilon(b,u).
 \end{align*}
That means two extensions $\mathcal E_F$ and $\mathcal E_{F'}$ are
equivalent.

Conversely, if $\mathcal E_F$ and $\mathcal E_{F'}$ are equivalent,
there exists a ring isomorphism $(b,u)\mapsto (b-\alpha _u,u)$.
Then, we have a homotopy  $\alpha : F\rightarrow F'$  by retracing
our steps.

 {\it Step 2: $\Omega$ is a surjection.}

Let $\mathcal E$ be an extension $E$ of $B$ by $Q$ of type
$(B,D,d,\theta )$ inducing $\psi: Q\rightarrow \Coker d$ (see the
commutative diagram \eqref{eq10}). We prove that $\mathcal E$ is
equivalent to an extension $\mathcal E_F$ which is associated to an
Ann-functor $(F,\breve{F},\widetilde{F}):\mathrm{Dis} Q\ri \mathcal
A$.

Let  $\mathcal A'=\mathcal A_{B\rightarrow E}$ be the Ann-category
associated to the E-system $(B,E,j,\theta')$. By Lemma \ref{t1}, the
pair $(id_B,\varepsilon)$ in the diagram \eqref{eq10} determines a
single Ann-functor $(K,\breve{K}, \widetilde{K}):\mathcal
A'\ri\mathcal A$.

Since $\pi_0\mathcal A'=Q, \pi_1\mathcal A'=0$, the reduced
Ann-category $S_{\mathcal A'}$   is nothing else but the
Ann-category Dis$Q$. Choose a stick  $(e_u, i_e), e\in E, u\in Q$,
of $\mathcal A'$  (that is, $\left\{e_u\right\}$ is a representative
of $Q$ in
 $E$). By \eqref{eq2}, the canonical Ann-functor
 $(H',\breve{H'}, \widetilde{H}'):\mathrm{Dis} Q\ri
\mathcal A'$ is given by
$$H'(u)=e_u, \
\breve{H'}_{u,v}= -i_{e_u+e_v}= g'(u,v), \
\widetilde{H}'_{u,v}=-i_{e_u.e_v}=h'(u,v).$$ The composition
$F=K\circ H'$ is an Ann-functor $\mathrm{Dis} Q\ri \mathcal A$,
where
$$F(u)=\varepsilon(e_u), \
\breve{F}_{u,v}= \breve{H'}_{u,v}= g'(u,v), \
\widetilde{F}_{u,v}=\widetilde{H}'_{u,v}=h'(u,v).$$
 According to the
proof of Theorem \ref{dlc}, we construct an extension $\mathcal E_F$
of the crossed product $E_0=[B,\varphi,g',h',Q]$ which is associated
to $(F, \breve{F}, \widetilde{F})$.

 We now prove that $\mathcal E$
and $\mathcal E_F$ are equivalent, that is, there is a commutative
diagram

\begin{align*}  \begin{diagram}
\xymatrix{\mathcal E_F:\;\;\; 0 \ar[r]& B \ar[r]^{j_0} \ar@{=}[d]
&E_0 \ar[r]^{p_0} \ar[d]^\eta  & Q \ar[r]\ar@{=}[d] & 0&\;\;\;\;E_0
\ar[r]^{\varepsilon_0}&D \\
\mathcal E:\;\;\; 0 \ar[r]& B \ar[r]^{j}  &E \ar[r]^{p}   & Q
\ar[r]& 0&\;\;\;\;E \ar[r]^{\varepsilon}&D}
\end{diagram}
\end{align*}
and $\varepsilon\eta =\varepsilon_0.$

Indeed, since every element of $E$ can be written uniquely as
$b+e_u, b\in B$, we can define a map
$$\eta :E_0\ri E, \ (b,u)\mapsto b+e_u.$$
We next verify that  $\eta $ is a ring isomorphism. The
representatives  ${e_u}$  have the following properties
\begin{equation}
\varphi(u)c=\theta'_{e_u}c,\ c\varphi (u)=c\theta'_{e_u},  \ c\in
B,\label{23}
\end{equation}
\begin{equation}
e_u+e_v=-i_{e_u+e_v}+e_{u+v}=g'(u,v)+e_{u+v},\label{24}
\end{equation}
\begin{equation}
e_u.e_v =-i_{e_u.e_v}+ e_{u.v}= h'(u,v)+e_{uv}.\label{25}
\end{equation}
(The relation \eqref{23}  holds since the pair $(id_B,\varepsilon)$
is a morphism of E-systems. The relations \eqref{24}, \eqref{25}
hold thanks to the definition of a morphism in $\mathcal A'$.) Now,
we have
\begin{align*}
\eta[(b,u)+(c,v)]&=\eta(b+c+g'(u,v),u+v)=
b+c+g'(u,v)+e_{u+v}\\
&\stackrel{(\ref{24})}{=}b+c+e_u+e_v= (b+e_u)+(c+e_v)=\eta
(b,u)+\eta (c,v).
\end{align*}
\begin{align*}
\eta[(b,u)(c,v)]&=
\eta(bc+b\varphi(v)+\varphi(u)c+ h'(u,v),uv)\\
&=bc+b\varphi(v)+\varphi(u)c+ h'(u,v)+e_{uv}\\
&\stackrel{(\ref{23}),(\ref{25})}{=}bc+b\theta '_{e_v}+\theta '_{e_u}c + e_ue_v\\
&= bc+ b.e_v + e_u.c +e_u.e_v\\
&= (b+e_u).(c+e_v)=\eta (b,u).\eta (c,v).
\end{align*}
Finally, choose the representative $e_u$ such that
$\varepsilon(e_u)=x_{\psi(u)}$ (since it follows from \eqref{eq10}
that $q(\varepsilon(e_u))=\psi p(e_u)=\psi(u)$). Thus,
$$\varepsilon \eta (b,u)=\varepsilon (b+e_u)=\varepsilon(b)+\varepsilon(e_u)=d(b)+x_{\psi(u)}=\varepsilon_0(b,u),$$
that is, $\mathcal E$ and $\mathcal E_F$ are equivalent.
\end{proof}

Now, the bijection mentioned in Theorem  \ref{dlc} is obtained as
follows. Note that there is a natural bijection
$$\mathrm{Hom}[\mathrm{Dis}Q, \mathcal A]\leftrightarrow \mathrm{Hom}[\mathrm{Dis} Q, S_{\mathcal A}].$$
Then, since $\pi_0(\mathrm{Dis}Q)=Q$ and $ \pi_1(S_{\mathcal
A})=\mathrm{Ker}d$,  Theorem \ref{schr} and Theorem \ref{s63} imply
$$\mathrm{Ext}_{B\ri D}(Q, B,\psi)\leftrightarrow
H^2_{Shu}(Q,\mathrm{Ker }d).$$

{\bf Acknowledgement} The authors are much indebted to the referee, whose useful observations greatly improved our exposition.

\end{document}